\theoremstyle{definition}
\newtheorem{example}{Example}
\newtheorem{theorem}{THEOREM}[section]
\newtheorem{lemma}{LEMMA} [section]
\newtheorem{definition}{DEFINITION}[section]
\newtheorem{remark}{REMARK}[section]
\newtheorem{proposition}{PROPOSITION}[section]
\begin{document}

%\begin{frontmatter}
\title{Nonparametric Estimation of Means on Hilbert Manifolds and Extrinsic Analysis of Mean Shapes of Contours}
%\runtitle{Estimation of Means on Hilbert Manifolds}

\author{Leif Ellingson$^\ddag$ \thanks{Research supported by National Science Foundation Grant DMS-0805977}, Vic Patrangenaru$^\dagger$ 
\thanks{Research supported by National Science Foundation Grants DMS-0805977, DMS-1106935 and the National Security Agency Grant
MSP-H98230-08-1-0058}, and Frits Ruymgaart$^\ddag$ \\
$^\dagger$Florida State University and $^\ddag$Texas Tech University}

%\runauthor{L. Ellingson et al.}

%\end{aug}

\maketitle

\begin{abstract}

{

Motivated by the problem of nonparametric inference in high level digital image analysis,
we introduce a general extrinsic approach for data analysis on Hilbert manifolds with a focus on
means of probability distributions on such sample spaces. To perform inference on these means, we appeal
to the concept of neighborhood hypotheses from functional data analysis and derive a one-sample
test. We then consider analysis of shapes of contours lying in the  plane. By embedding the corresponding sample space of
such shapes, which is a Hilbert manifold, into a space of Hilbert-Schmidt operators, we can define
extrinsic mean shapes of planar contours and their sample analogues. We apply the general methods to this problem
while considering the computational restrictions faced when utilizing digital imaging data.
Comparisons of computational cost are provided to another method for analyzing shapes of contours.

Keywords: data analysis on Hilbert manifolds, extrinsic means, nonparametric bootstrap, planar contours, digital image analysis, automated randomized landmark selection, statistical shape analysis
}

\vskip6pt

%{{\bf \noindent Keywords}: extrinsic means, planar contours,
% data analysis on {  Hilbert
%manifolds}, statistical shape analysis, nonparametric bootstrap,
%digital image analysis, automated randomized landmark selection\\}

\end{abstract}

%\newpage
%\Large
\section{Introduction}
{
It is the purpose of this paper to introduce general methodology for data analysis on infinite dimensional Hilbert manifolds.  Nonparametric procedures for inference on the population mean are included, focusing on an extrinsic approach.  Theoretical results for both estimation and testing hypotheses are derived.  Dette and Munk (1998) \cite{Munk:98} rekindled the interest in neighborhood hypotheses by showing that they are appropriate and useful in a nonparametric functional context.  Since ``in practice" once cannot expect an infinite dimensional object to be exactly equal to a prescribed hypothesized object, the neighborhood hypothesis will be employed too in this paper.

Just as in finite dimensions, the general theory could be applied to a variety of special manifolds.  However, here we will restrict ourselves to the important case of projective spaces that will be embedded in the Hilbert-Schmidt operators, which is useful in the analysis of shapes of infinite dimensional configurations lying in the plane.  Necessary computational considerations are considered for the implementation of the methodology and the computational cost turns out to compare favorably with that of other procedures used in shape analysis.

Let us now turn to a brief discussion of the existing literature that is most relevant for the present paper.  To the best of our knowledge, the combination of the theory for functional data in infinite dimensional linear spaces with the geometry for infinite dimensional manifolds for the purpose of  nonparameteric analysis is new.  There are, however, some procedures that seem to lack the underlying asymptotic theory needed for a nonparametric analysis.  These will be reviewed along with some pertinent theoretical results for functional data, and some relevant existing methods for finite dimensional manifolds.

A number of statistical methodologies have been developed for the analysis of
data lying on Hilbert spaces for the purpose of studying functional data. Some multivariate methods, such as PCA, have useful extensions in functional data analysis   (Lo$\grave{e}$ve \cite{Lo:1977}). For dense functional data, the asymptotics of the resulting eigenvalues and eigenvectors were studied by Dauxois et. al. (1982) \cite{DaPoRo:1982}. Even for sparse functional data, these methods have been proved useful (see Hall et. al. (2006)\cite{haMuWa:2006}, M$\ddot{u}$ller et. al.(2006)\cite{Mueler:2006}) and have multiple applications.
There are, nevertheless, techniques defined for multivariate analysis that often fail to be directly generalizable to infinite-dimensional data, especially when such data is nonlinear.
New methodologies have been developed to account for these high dimensional problems, with many of these presented in a standard text by
Ramsay and Silverman (2005) and given by references therein. For high dimensional inference on Hilbert spaces,
Munk and Dette (1998) \cite{Munk:98} utilized the concept of neighborhood hypotheses
for performing tests on nonparametric regression models.  Following from this approach,
Munk {\it et al.} (2008) \cite{MuPaPaPaRu:2008} developed one-sample and multi-sample
tests for population means.

However these methods do not account for estimation of means on infinitely dimensional curved spaces, such as Hilbert manifolds. In order to properly analyze such data, these methods must
be further generalized and modified. A key example in which such data arises is in the
statistical analysis of direct similarity shapes of planar contours, which can be viewed as outlines
of 2D objects in an image.

Unlike functional data analysis, which started from dense functional data and was extended to sparse functional data, the study of shapes in the plane originated with
D. G. Kendall (1984) \cite{Kend:1984}, which showed that the space $\Sigma_2^k$ of direct
similarity shapes of nontrivial planar finite configurations of $k$ points is a complex projective space $\mathbb CP^{k-2}.$ However, definitions of location
and variability parameters for probability distributions were considered much later.
While the full Procrustes estimate of a mean shape was defined by Kent (1992),
a nonparametric definition of a mean shape was first introduced by Ziezold (1994) \cite{Zi:1994} based upon the notion of Fr\'echet population mean (Fr\'echet (1948) \cite{Fr:1948}, Ziezold (1977) \cite{Zi:1977}).  This approach was followed by Ziezold
(1998) \cite{Zi:2000}, Le and Kume (2000) \cite{LeKu:2000}, Kume and Le (2000
\cite{KuLe:2000}, 2003 \cite{KuLe:2003}), Le (2001) \cite{Le:2001}, Bhattacharya
and Patrangnearu (2003) \cite{BhPa:2003}, and Huckemann and Ziezold (2006) \cite{HuZi:2006}.

While a majority of these methods have defined means in terms of Riemannian distances,
as initially suggested by Patrangenaru (1998) \cite{Pa:1998}, a Veronese-Whitney (VW) extrinsic mean similarity shape was also introduced by Patrangenaru (op.cit.), in terms of the VW embedding of $\mathbb CP^{k-2}$
into the space $S(k-1,\mathbb C)$ of selfadjoint $(k-1)\times (k-1)$ matrices.
%Bhattacharya and Patrangenaru (2005) \cite{BhPa:2005} noted that this extrinsic
%mean is the Fr\'echet mean using the distance on $\mathbb CP^{k-2}$ induced
%by the Euclidean distance on $S(k-1,\mathbb C)$.
The asymptotic distribution of
this extrinsic sample mean shape and the resulting bootstrap distribution are
given in Bhattacharya and Patrangenaru (2005) \cite{BaPa:2005}, Bandulasiri et.
al (2009) \cite{BaBhPa:2009} and Amaral et at. (2010) \cite{AmDrPaWo:2010}.
}

Motivated by {the pioneering work of Zahn and Roskies
(1972) and by other} applications in object recognition from digital
images, Grenander (1993) \cite{Gr:1993} considered shapes as points
on some infinite dimensional {space}. A manifold model
for direct similarity shapes of planar closed curves, first
suggested by Azencott (1994) \cite{Az:1994}, was pursued in Azencott
et. al. (1996)\cite{AzCoYo:1996}, and detailed by Younes (1998
\cite{Yo:1998}, 1999 \cite{Yo:1999}). This idea gained ground at the
turn of the millennium, with more researchers studying shapes of
planar closed curves (eg. Sebastian et. al. (2003) \cite{SeKlKi:2003}).

{
Klassen et al. (2004) \cite{KlSrMiJo:2004}, Michor and Mumford (2004)
\cite{MicMum:2004} and Younes et al. (2008) \cite{YoMiShMu:2008}
follow the methods of Small (1996) \cite{Small:1996} and Kendall by
defining a Riemannian structure on a shape manifold. Klassen { et al.}
(op. cit) compute an intrinsic sample mean shape, which is a Fr\'echet
sample mean for the chosen Riemannian distance. The related papers Mio
and Srivastava (2004) \cite{MiSr:2004}, Mio et al. (2007) \cite{MiSrJo:2005},
Srivastava et al. (2005) \cite{SrJoMiLi:2005}, Mio et al. (2005) \cite{MiSrKl:2005},
Kaziska and Srivastava (2007) \cite{KaSr:2007}, and Joshi {it et al.} (2007)
\cite{JoSrKlJe} computean intrinsic sample mean similarity shape of closed curves modulo reparameterizations, for a
Riemannian metric of their preference, out of infinitely many Riemannian metrics
available. To compute these types of means, those papers use iterative gradient
search algorithms, as closed-form solutions for the means do not exist.

%This is problematic for a number of reasons. First,

However, these approaches only consider computational aspects without addressing
fundamental definitions of populations of shapes, population means and
population covariance operators, thus making no distinction between a population
parameter and its sample estimators. As such, the idea of statistical inference
is absent from these papers. 

This paper is organized as follows. In Section 2, we introduce methodology for data analysis on Hilbert manifolds, including a definition for the extrinsic mean (set) of a random object an embedded Hilbert manifold and introduce an inference procedure for
a neighborhood hypothesis for an extrinsic mean. In Section 3, we define the
space of direct similarity shapes of planar contours.  Section
4 is dedicated to the derivation of the asymptotic distribution of the
extrinsic sample mean contour shape. Due to the infinite-dimensionality, the
sample mean cannot be properly studentized, so in Section 5, we instead apply
the neighborhood hypothesis test to this problem. 

The remainder of the paper concerns application of the methodology to digital imaging data. In section 6 we address the representation,  approximation,
and correspondence problems faced when working with such data in practice. In Section 7, we present examples of the neighborhood hypothesis test using
contours extracted from a database of silhouettes from digital images, collected by Ben Kimia \cite{Kimia}. In Section 8, we use nonpivotal nonparametric bootstrapping (see Efron (1979) \cite{Efron:79}, Hall (1992) \cite{Hall:1997}) to form confidence regions for the
extrinsic mean shape for examples from the same database and compare
computational speed of our method to that of Joshi et. al. (2007) \cite{JoSrKlJe}.
}
The paper ends with a discussion suggesting an extension of other methodologies from functional data or for data on finite dimensional manifolds to data analysis on Hilbert manifolds. An extension of the shape analysis methods to  more complicated, infinite
dimensional features captured in digital images, such as edge maps obtained
from gray-level images, is also suggested.
\section{Inference for means on Hilbert manifolds}
In this section we assume that $\mathbf H$ is a {\it separable, infinite dimensional
Hilbert space} over the reals. Any such space is isometric with
$l_2,$ the space of sequences $x = (x_n)_{n \in \mathbb N}$ of reals
for which the series $\sum_{n=0}^\infty x_n^2$ is convergent, with
the scalar product $<x,y> = \sum_{n=0}^\infty x_ny_n.$ A Hilbert
space with the norm $\|v\| = \sqrt{<v, v>},$ induced by the scalar
product, becomes a Banach space. Differentiability can be defined
with respect to this norm.
\begin{definition} \label{differentiable-banach} A function $f$ defined on an
open set $U$ of a Hilbert space $\mathbf H$ is Fr\'echet differentiable at a
point $x \in \ U,$ if there is a linear operator $T : \mathbf H \to \mathbf H,$
such that if we set
\begin{equation}\label{banach-dif}
\omega_x(h) = f(x + h ) - f ( x ) - T(h),
\end{equation}
then
\begin{equation}\label{banach-dif2}
\lim_{ h \to 0} \frac{\|\omega_x(h)\|}{\|h\|} = 0.
\end{equation}
\end{definition}
Since $T$ in Definition \ref{differentiable-banach} is unique, it is
called the differential of $f$ at $x$ and is also denoted by $d_xf.$
\begin{definition} \label{hilbert-manifold} A chart on a separable metric space
$(\mathcal M, \rho)$ is a one to one homeomorphism $\varphi: U \to
\varphi(U)$ defined on an open subset $U$ of $\mathcal M$ to a
Hilbert space $\mathbf H.$ A Hilbert manifold is a separable metric
space $\mathcal M,$ that admits an open covering by domain of
charts, such that the transition maps $\varphi_V \circ
\varphi_U^{-1} : \varphi_U(U \cap V) \to \varphi_V(U \cap V)$ are
differentiable.
\end{definition}
\begin{example}\label{proj-space}
The projective space $P(\mathbf H)$ of a Hilbert space $\mathbf H,$
the space of all one dimensional linear subspaces of $\mathbf H,$ has a
natural structure of Hilbert manifold modelled over $\mathbf H.$
Define the distance between two vector lines as their {angle}, and,
given a line $\mathbb{L} \subset \mathbf H,$  a neighborhood
$U_\mathbb{L}$ of $\mathbb{L}$ can be mapped via a homeomorphism
$\varphi_{\mathbb{L}}$ onto an open neighborhood of the
orthocomplement $\mathbb L^\bot $ by using the decomposition
$\mathbf H = \mathbb L \oplus \mathbb L^\bot.$ Then for two
perpendicular lines $\mathbb{L}_1$ and $\mathbb{L}_2,$ {
it is easy to } show that the transition maps
$\varphi_{\mathbb{L}_1}\circ\varphi_{\mathbb{L}_2}^{-1}$ are
differentiable as maps between open subsets in $\mathbb L_1^\bot ,$
respectively in $\mathbb L_2^\bot.$ A countable orthobasis of
$\mathbf H$ and the lines $\mathbb{L}_n, n \in \mathbb N$ generated
by the vectors in this orthobasis is used to cover $P(\mathbf H)$ with the
open sets $U_{\mathbb{L}_n}, n \in \mathbb N. $ Finally, use the fact
that for any line $\mathbb L, \mathbb L^\bot $ and $\mathbf H$ are
isometric as Hilbert
spaces. The line $\mathbb{L}$ spanned by a nonzero vector $\gamma \in \mathbf H$ is
usually denoted $[\gamma]$ when regarded as a projective point on
$P(\mathbf H)$ and will be denoted as such henceforth.
\end{example}

Similarly, one may consider complex Hilbert manifolds,
modeled on Hilbert spaces over $\mathbb C.$ A vector space over
$\mathbb C$ can be regarded as a vector space over the reals, by
restricting the scalars to $\mathbb R;$ therefore any complex
Hilbert manifold automatically inherits a structure of real Hilbert
manifold.

\subsection{Extrinsic analysis of means on Hilbert manifolds}
For Hilbert spaces that do not have a linear structure, standard methods
for data analysis on Hilbert spaces cannot directly be applied.  To
account for this nonlinearity, one may instead perform extrinsic analysis
by embedding this manifold in a Hilbert space.
\begin{definition} An embedding of a Hilbert manifold $\mathcal M$
in a Hilbert space $\mathbb H$ is a one-to-one differentiable
function $j: \mathcal M \to \mathbb H,$ such that for each $x \in
\mathcal M,$ the differential $d_x j$ is one to one, and the range
$j(\mathcal M)$ is a closed subset of $\mathbb H$ and the topology
of $\mathcal M$ is induced via $j$ by the topology of $\mathbb H.$
\end{definition}
\begin{example}\label{hilbert-schmidt}
{   We embed $P({\bf H})$ in $\mathcal{L}_{HS}=\bf{H}
\otimes \bf{H},$ the space of Hilbert-Schmidt operators of ${\bf H}$
into itself, via the Veronese-Whitney (VW) embedding $j$ given by
\begin{equation}\label{veronese1}
j([\gamma]) = \frac{1}{\|\gamma\|^2}\gamma \otimes \gamma.
\end{equation}
If $\|\gamma\|=1$, this definition can be reformulated as
\begin{equation}\label{veronese-hilbert2}
j([\gamma]) = \gamma \otimes \gamma.
\end{equation}
%Equation
%\eqref{veronese-hilbert2} is equivalent to
%\begin{equation}\label{veronese3}
%j([\gamma])(\beta) = <\beta,\gamma > \gamma.
%\end{equation}
The range of this embedding is the submanifold $\mathcal M_1$ of
rank one Hilbert-Schmidt operators of ${\bf H}.$ }
\end{example}
{  To define a location parameter for probability distributions on a Hilbert manifold,}
the concept of extrinsic means from Bhattacharya and Patrangenaru (2003, 2005)) is extended
below to the infinite dimensional case.
\begin{definition} \label{def:ext-mean}  If $j:\mathcal M \to \mathbb H$ is an embedding of a
Hilbert manifold in a Hilbert space, the chord distance $\rho$ on
$\mathcal M$ is given by $\rho(x,y) = \|j(x) - j(y)\|,$ and given a
random object $X$ on $\mathcal M,$ the associated Fr\'echet function
is
\begin{equation}\label{frechet}
\mathcal F_j(x) = E(\|j(X) - j(x)\|^2).
\end{equation}
The set of all minimizers of $\mathcal F_j$ is called the extrinsic
mean set of $X.$ If the extrinsic mean set has one element only,
that element is called the extrinsic mean and is labeled $\mu_{E,j}$
or simply $\mu_E.$
\end{definition}
\begin{proposition} \label{ext} Consider a random object $X$ on $\mathcal M$
that has an extrinsic mean set. Then (i) $j(X)$ has a mean vector
$\mu$ and (ii) the extrinsic mean set is the set of all points $x
\in \mathcal M,$ such that $j(x)$ is at minimum distance from $\mu.$
(iii) In particular, $\mu_E$ exists if {  there is a unique point on $j(\mathbf M)$ at minimum distance from $\mu,$ the projection $P_j(\mu)$ of $\mu$ on $j(\mathbf M),$ and in this case $\mu_E =
j^{-1}(P_j(\mu)).$}
\end{proposition}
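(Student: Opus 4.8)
The plan is to reduce the minimization of the Fréchet function to a minimization of Euclidean distance in the ambient Hilbert space, which is the standard strategy for extrinsic means carried over from the finite-dimensional case of Bhattacharya and Patrangenaru. First I would establish part (i): since $X$ has an extrinsic mean set, the Fréchet function $\mathcal F_j$ is finite at some point, meaning $E(\|j(X) - j(x_0)\|^2) < \infty$ for some $x_0 \in \mathcal M$. Expanding the square and using the triangle inequality $\|j(X)\| \le \|j(X) - j(x_0)\| + \|j(x_0)\|$, one sees that $E(\|j(X)\|^2) < \infty$, so $j(X)$ is square-integrable as an $\mathbb H$-valued random element. A square-integrable random element of a Hilbert space possesses a (Bochner) mean vector $\mu = E(j(X)) \in \mathbb H$, which gives (i).

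For part (ii), the key computation is the bias-variance style decomposition of the Fréchet function. I would write, for any $x \in \mathcal M$,
\begin{equation}\label{frechet-decomp}
\mathcal F_j(x) = E(\|j(X) - \mu\|^2) + \|\mu - j(x)\|^2,
\end{equation}
where the cross term $2 E\langle j(X) - \mu, \mu - j(x)\rangle$ vanishes because $E(j(X) - \mu) = 0$ and $\mu - j(x)$ is a fixed (non-random) vector, so the inner product may be pulled out of the expectation. Since the first summand $E(\|j(X) - \mu\|^2)$ does not depend on $x$, minimizing $\mathcal F_j$ over $\mathcal M$ is equivalent to minimizing $\|\mu - j(x)\|^2$, i.e.\ to finding those $x \in \mathcal M$ for which $j(x)$ is at minimum distance from $\mu$. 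This establishes (ii).

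Part (iii) is then essentially a corollary: if there is a unique point of $j(\mathcal M)$ nearest to $\mu$, call it the projection $P_j(\mu)$, then by (ii) the extrinsic mean set has a single element, and since $j$ is one-to-one (being an embedding), that element is $j^{-1}(P_j(\mu))$, which by Definition \ref{def:ext-mean} is the extrinsic mean $\mu_E$. The main obstacle to watch for is the justification of the mean vector and the cross-term interchange in infinite dimensions: one must confirm that the Bochner integral $\mu = E(j(X))$ genuinely exists (which is exactly what square-integrability of $j(X)$ guarantees) and that expectation commutes with the inner product against a fixed vector (which is the defining property of the Bochner mean, or equivalently follows from continuity of the linear functional $v \mapsto \langle v, \mu - j(x)\rangle$). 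Once integrability is secured, the remaining algebra is identical to the finite-dimensional argument and presents no further difficulty.
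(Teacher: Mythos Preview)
Your proposal is correct and follows essentially the same strategy as the paper: both arguments establish square-integrability of $j(X)$ from finiteness of the Fr\'echet function, and both rely on the vanishing of the cross term $E\langle j(X)-\mu,\,\mu-j(x)\rangle$ to reduce the minimization of $\mathcal F_j$ to the minimization of $\|j(x)-\mu\|^2$. The only cosmetic difference is that the paper phrases step (ii) as an inequality (starting from a minimizer $\nu$ of $\mathcal F_j$ and deducing $\|j(\nu)-\mu\|^2\le\|j(x)-\mu\|^2$), whereas your bias--variance identity $\mathcal F_j(x)=E(\|j(X)-\mu\|^2)+\|\mu-j(x)\|^2$ gives both inclusions at once; the underlying computation is the same.
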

{\bf Proof.} Let $Y = j(X).$ Note that the Hilbert space $\mathbb H $ is
complete as a metric space, therefore $\inf_{y \in \mathbb H }E(\|Y - y\|^2)
=  \min_{y \in \mathbb  H}E(\|Y - y\|^2) \leq \min_{y \in j(\mathcal M)}E(\|Y -
y\|^2),$ and from our assumption, it follows that $\inf_{y \in \mathbb
H}E(\|Y - y\|^2)=  \min_{y \in \mathbb H }E(\|Y - y\|^2)$ is finite, which
proves (i). To prove (ii), assume for $\nu$ is a point in the
extrinsic mean set, and $ x $ is an arbitrary point on $\in \mathcal
M.$ From $ E(\|j(\nu) - Y\|^2)\leq E(\|j(x) - Y\|^2)\,$ and, since
$j(\nu) - \mu$ and $j(x) - \mu$ are constant vectors, it follows
that
\begin{equation}\label{proj1} \|j(\nu) - \mu\|^2 \leq \|j(x) -
\mu\|^2 + 2E(<j(x)-j(\nu), \mu - Y>). \end{equation}
It is obvious
that the expected value on the extreme righthand side of equation
\eqref{proj1} is zero \qedhere
We now consider a property that is critical for having
a well-defined, unique extrinsic mean.
\begin{definition} A random object $X$ on a Hilbert manifold
$\mathcal M$ embedded in a Hilbert space is $j$-nonfocal if there is
a unique point $p$ on $j(\mathcal M)$ at minimum distance from
$E(j(X)).$
\end{definition}
\begin{example} The unit sphere
$S(\mathbb H) = \{ x \in \mathbb H, \|x\| = 1\}$ is a Hilbert manifold
embedded in $\mathbb H$ via the
inclusion map $j: S(\mathbb H) \to \mathbb H, j(x) = x.$ A random object
$X$ on $S(\mathbb H)$ of mean $\mu$ is $j$-nonfocal, if $\mu \neq 0.$
\end{example}
Using this property, one may give an explicit formula for the extrinsic mean for a random object
on $P(\mathbf H)$ with respect to the VW embedding, which we will call VW mean.
\begin{proposition}\label{ext} Assume $X=[\Gamma]$ is a random object
in $P({\bf H}).$ Then the
VW mean of $X$ exists if and only if
$E(\frac{1}{\|\Gamma\|^2}\Gamma \otimes \Gamma)$ has a simple
largest eigenvalue, in which case, the distribution is $j$-nonfocal.
 In this case the VW mean is $\mu_E =
[\gamma],$ where $\gamma$ is an eigenvector for this eigenvalue.
\end{proposition}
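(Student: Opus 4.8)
The plan is to reduce the minimization of the Fr\'echet function \eqref{frechet} to a Rayleigh-quotient maximization and then read off the answer from the spectral theorem for compact self-adjoint operators. First I would observe that $\mu := E(\frac{1}{\|\Gamma\|^2}\Gamma \otimes \Gamma)$ is well defined with no moment hypothesis: each summand $\frac{1}{\|\Gamma\|^2}\Gamma \otimes \Gamma$ is a rank-one orthogonal projection, so it has Hilbert--Schmidt norm exactly $1$ and trace $1$, and since $j(X)$ is therefore a.s.\ bounded in $\mathcal L_{HS}$, its Bochner mean $\mu$ exists. Taking expectations preserves self-adjointness, positivity, and trace, so $\mu$ is a positive, self-adjoint, trace-class operator with $\mathrm{tr}(\mu)=1$; in particular it is compact and, being nonzero, admits a spectral decomposition $\mu = \sum_k \lambda_k\, e_k\otimes e_k$ with $\lambda_1\ge \lambda_2\ge\cdots\ge 0$, $\sum_k\lambda_k=1$, $\lambda_1>0$, and the $e_k$ orthonormal.

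Next, by the preceding proposition the extrinsic mean set of $X$ consists exactly of those $[\gamma]$ for which $j([\gamma])$ is nearest to $\mu$, so the whole question becomes the geometry of projecting $\mu$ onto the range $\mathcal M_1 = j(P(\mathbf H))$ of the VW embedding \eqref{veronese1}. For a unit vector $v$ I would expand the squared Hilbert--Schmidt distance,
\[
\|\mu - v\otimes v\|_{HS}^2 = \|\mu\|_{HS}^2 + \|v\otimes v\|_{HS}^2 - 2\langle \mu, v\otimes v\rangle_{HS} = \|\mu\|_{HS}^2 + 1 - 2\langle v,\mu v\rangle,
\]
using $\|v\otimes v\|_{HS}^2 = \mathrm{tr}((v\otimes v)^2)=1$ and the standard identity $\langle \mu, v\otimes v\rangle_{HS} = \mathrm{tr}(\mu\,(v\otimes v)) = \langle v,\mu v\rangle$. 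Since the first two terms do not depend on $v$, minimizing the distance over $\mathcal M_1$ is equivalent to maximizing the Rayleigh quotient $\langle v,\mu v\rangle$ over the unit sphere.

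I would then invoke the spectral theorem: for the compact positive operator $\mu$ the maximum $\max_{\|v\|=1}\langle v,\mu v\rangle = \lambda_1$ is attained precisely on the unit sphere of the top eigenspace $E_{\lambda_1}$. The map $v\mapsto v\otimes v$ collapses all of these maximizers to a single rank-one projection exactly when $\dim E_{\lambda_1}=1$, i.e.\ when $\lambda_1$ is simple; if $\dim E_{\lambda_1}\ge 2$ it produces a continuum of distinct nearest points $v\otimes v$. Hence the nearest point $P_j(\mu)$ is unique --- equivalently $X$ is $j$-nonfocal and, by the preceding proposition, the VW mean exists --- if and only if $\lambda_1$ is simple, and in that case $\mu_E = j^{-1}(P_j(\mu)) = [\gamma]$ for any eigenvector $\gamma$ belonging to $\lambda_1$.

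The only genuinely delicate point will be the existence and attainment of the maximizer in infinite dimensions: this is exactly where compactness of $\mu$ is indispensable, since for a merely bounded self-adjoint operator the supremum of the Rayleigh quotient need not be achieved and a nearest point in $\mathcal M_1$ could fail to exist. It is worth stressing that here compactness comes for free, forced by trace-class-ness, which in turn is a consequence of each $j(X)$ being a trace-one projection. The remaining ingredients --- the distance expansion and the simple-largest-eigenvalue dichotomy --- are then routine.
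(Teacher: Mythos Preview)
Your argument is correct and follows essentially the same route as the paper: expand $\|\mu - v\otimes v\|_{HS}^2$, reduce the minimization to maximizing $\langle \mu, v\otimes v\rangle_{HS}=\langle v,\mu v\rangle$, and invoke the spectral decomposition to conclude that the nearest point is unique precisely when the top eigenvalue is simple. Your explicit justification that $\mu$ exists as a Bochner mean and is trace-class (hence compact, so the Rayleigh maximum is actually attained) is a worthwhile addition that the paper's proof takes for granted.
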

{\bf Proof.}
 We select an arbitrary point $[\gamma] \in P(\mathbf H),
\|\gamma\| = 1.$ The spectral decomposition of $\Lambda =
E(\frac{1}{\|\Gamma\|}\Gamma)$ is $\Lambda = \sum_{k=1}^\infty
\delta^2_kE_k, \delta_1 \geq \delta_2 \geq \dots $ where for all $k
\geq 1, E_k = e_k\otimes e_k, \|e_k\| = 1,$ therefore if $\gamma =
\sum_{k=1}^\infty x_k e_k, \sum_{k=1}^\infty x_k^2 < \infty,$ then
$\|j(\gamma) - \mu \|^2 = \|\gamma\otimes \gamma\|^2 +
\sum_{k=1}^\infty \delta_k^2 - 2<\Lambda, \gamma\otimes\gamma>.$ To
minimize this distance it suffices to maximize the projection of the
unit vector $\gamma\otimes\gamma$ on $\Lambda.$ If $\delta_1 =
\delta_2$ there are the vectors $\gamma_1 = e_1$ and $\gamma_2 =
e_2$ are both maximizing this projection, therefore there is a
unique point $j([\gamma])$ at minimum distance from $\Lambda$ if and
only if $\delta_1 > \delta_2.$ \qedhere
{  A definition of a covariance parameter is also needed in order
to define asymptotics and perform inference on an extrinsic mean.}
The following result is a straightforward extension of the
corresponding finite dimensional result in Bhattacharya and
Patrangenaru (2005). The tangential component $tan(v)$ of $ v \in \mathbb{H}$
 w.r.t. the orthobasis $e_a (P_j
(\mu))\in T_{P_j (\mu)}j(M), a = 1, 2,\cdots, \infty$ is given by
\begin{equation}\label{e:tancomp}
tan(v) = \sum_{a=1}^\infty (e_a (P_j (\mu))\cdot v)e_a (P_j (\mu)).
\end{equation}
Then given the $j$-nonfocal random object
$X,$ extrinsic mean $\mu_E$, and covariance operator of $\tilde \Sigma = cov(j(X)),$ if $f_a(\mu_E) = d_{\mu_E}^{-1}(e_a (P_j (\mu))), \forall a = 1, 2, \dots,$ then $X$
has {\em extrinsic
covariance operator} represented w.r.t. the basis $f_1(\mu_E),\cdots$ by the infinite matrix $\Sigma_{j,E}$:
\begin{eqnarray}
\Sigma_{j,E} =
\nonumber \\
\left[ \sum d_\mu P_j (e_b) \cdot e_a (P_j(\mu)) \right]_{a = 1,...}
\tilde \Sigma
 \left[ \sum d_\mu P_j (e_b) \cdot e_a(P_j
(\mu))\right]_{a = 1,...}^T .
\end{eqnarray}
 With extrinsic parameters of location and covariance now defined,
the asymptotic distribution of the extrinsic mean can be shown as in the final dimensional case (see Bhattacharya and Patrangenaru (2005)\cite{BhPa:2005}):
\begin{proposition}\label{p:vw-mean} Assume $X_1, \dots, X_n$ are i.i.d.
random objects (r.o.'s) from a $j$-nonfocal distribution on a Hilbert manifold
$\mathcal M,$ for a given embedding $j: \mathcal M \to {\bf H}$ in
a Hilbert space ${\bf H}$ with extrinsic mean $\mu_E$ and extrinsic
covariance operator $\Sigma.$ Then, with probability one, for $n$
large enough, the extrinsic sample mean $\bar X_{n,E}$ is well
defined. If we decompose $j(\bar X_{n,E}) - j(\mu_E)$ with
respect to the scalar product into a tangential component in
$T_{j(\mu_E)}j(\mathcal M)$ and a normal component
$N_{j(\mu_E)}j(\mathcal M),$ then
\begin{equation}\label{eq:clt}
\sqrt{n}(tan(j(\bar X_{n,E}) - j(\mu_E))) \to_d \mathcal G,
\end{equation}
where $\mathcal G$ has a Gaussian distribution in
$T_{j(\mu_E)}j(\mathcal M)$ with extrinsic covariance operator
$\Sigma_{j,E}.$
\end{proposition}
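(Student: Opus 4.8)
The plan is to lift the problem into the ambient Hilbert space, where $Y_i := j(X_i)$ are i.i.d.\ random elements with mean $\mu = E(j(X))$ and covariance operator $\tilde\Sigma$, prove a central limit theorem there, and transport the limit back to $\mathcal M$ through the projection $P_j$. First I would settle the almost-sure well-definedness of $\bar X_{n,E}$. Since the existence of the extrinsic covariance operator forces $E\|j(X)\|^2 < \infty$, the strong law of large numbers for Hilbert-space valued random elements gives $\bar Y_n \to \mu$ almost surely, where $\bar Y_n = n^{-1}\sum_{i=1}^n Y_i$. By $j$-nonfocality, $\mu$ has a unique nearest point $P_j(\mu) = j(\mu_E)$ on the closed submanifold $j(\mathcal M)$. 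The set of nonfocal points is open and $P_j$ is continuous on it, so almost surely $\bar Y_n$ is eventually nonfocal; hence $P_j(\bar Y_n)$, and therefore $\bar X_{n,E} = j^{-1}(P_j(\bar Y_n))$, is well defined for all large $n$.

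The second ingredient is the Hilbert-space central limit theorem: because $Y$ has finite second moment and $\tilde\Sigma$ is consequently trace class, $\sqrt{n}(\bar Y_n - \mu) \to_d \mathcal N$, where $\mathcal N$ is a centered Gaussian random element of $\mathbb H$ with covariance operator $\tilde\Sigma$. I would then invoke the infinite-dimensional delta method. The crucial geometric fact is that $P_j$ is Fr\'echet differentiable (in the sense of Definition \ref{differentiable-banach}) at the nonfocal point $\mu$, with differential $d_\mu P_j : \mathbb H \to T_{P_j(\mu)}j(\mathcal M)$ whose range is exactly the tangent space to $j(\mathcal M)$ at $P_j(\mu)$. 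Applying the delta method and using $P_j(\mu) = j(\mu_E)$ together with $P_j(\bar Y_n) = j(\bar X_{n,E})$ yields $\sqrt{n}(j(\bar X_{n,E}) - j(\mu_E)) \to_d d_\mu P_j(\mathcal N)$.

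It remains to take the tangential component and identify the limiting covariance. Since the image of $d_\mu P_j$ already lies in $T_{j(\mu_E)}j(\mathcal M)$, the continuous mapping theorem applied to the continuous linear tangential projection gives $\sqrt{n}\,\text{tan}(j(\bar X_{n,E}) - j(\mu_E)) \to_d d_\mu P_j(\mathcal N) =: \mathcal G$, which is Gaussian. Expressing $\mathcal G$ in the orthobasis $e_a(P_j(\mu))$ of the tangent space, its covariance is the pushforward of $\tilde\Sigma$ under $d_\mu P_j$; writing the matrix of $d_\mu P_j$ with entries $d_\mu P_j(e_b)\cdot e_a(P_j(\mu))$, this pushforward is precisely the infinite matrix $\Sigma_{j,E}$ displayed just before the statement, completing the identification and matching the finite-dimensional computation of Bhattacharya and Patrangenaru (2005).

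The main obstacle will be the Fr\'echet differentiability of $P_j$ at nonfocal points in the infinite-dimensional setting, since in finite dimensions this is routine via a tubular-neighborhood or implicit-function argument but here requires more care. I would handle it by applying the Hilbert-space implicit function theorem to the normal-bundle map $(p,v) \mapsto p + v$ with $v \in N_p\, j(\mathcal M)$: its differential at $(P_j(\mu),\, \mu - P_j(\mu))$ is invertible precisely because $\mu$ is nonfocal, so a local smooth inverse exists, from which the openness of the nonfocal set, the continuity of $P_j$ used above, and the differentiability of $P_j$ all follow. Verifying the remaining hypotheses, namely the finite-second-moment and trace-class conditions underlying the Hilbert-space CLT and the delta method, is purely technical and is guaranteed by the existence of $\tilde\Sigma$.
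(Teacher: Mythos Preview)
Your proposal is correct and follows precisely the route the paper indicates: the paper does not give an independent proof of this proposition but simply states that it ``can be shown as in the final dimensional case (see Bhattacharya and Patrangenaru (2005))'', and your argument---lift via $j$, apply the Hilbert-space SLLN and CLT to $\bar Y_n$, then push through $P_j$ via the delta method and read off $\Sigma_{j,E}$---is exactly the infinite-dimensional transcription of that finite-dimensional proof. You have in fact gone further than the paper by isolating the one genuinely nontrivial step the paper glosses over, namely the Fr\'echet differentiability of $P_j$ at nonfocal points in infinite dimensions, and your proposed implicit-function-theorem argument on the normal-bundle map is the natural way to establish it.
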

\subsection{A one-sample test of the neighborhood hypothesis}
{Following from a neighborhood method in the context of
regression by Munk and Dette (1998)\cite{Munk:98}, Munk { et al.}
(2008)$\cite{MuPaPaPaRu:2008}$ developed tests for means of random
objects on Hilbert spaces. We now adapt this methodology for tests
for extrinsic means.  First, however,}
we will need the following useful extension of
Cramer's delta method. {  The proof is left to the reader.}
\begin{theorem}\label{t:Cramer-ext} For $ a =1, 2$ consider an embedding
$j_a: \mathcal M_a \to {\bf H}_a$ of a Hilbert manifold
$\mathcal M_a$ in
a Hilbert space ${\bf H}_a.$
Assume $X_1, \dots, X_n$ are i.i.d.
r.o.'s from a $j_1$-nonfocal distribution on
$\mathcal M_1$ for a given embedding $j_1: \mathcal M_1 \to {\bf H}_1$ in
a Hilbert space ${\bf H}_1,$ with extrinsic mean $\mu_{E}$ and extrinsic
covariance operator $\Sigma.$ Let $\varphi: \mathcal M_1 \to \mathcal M_2
$ be a differentiable function, such that $\varphi(X_1)$ is a $j_2$-nonfocal
r.o. on $\mathcal M_2.$ Then
\begin{equation}\label{eq:cramer}
\sqrt{n}tan_{j_2(\varphi(\mu_E))}(j_2(\varphi(\bar X_{n,E})) - j_2(\varphi(\mu_E))) \to_d Y,
\end{equation}
where $Y \sim \mathcal N(0, d_{\mu_E}\varphi^{*}\Sigma
d_{\mu_E}\varphi).$ Here $L^{*}$ is the adjoint operator of $L.$
\end{theorem}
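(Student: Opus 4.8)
The plan is to read this as a manifold-valued, infinite-dimensional instance of Cram\'er's delta method, built directly on the central limit theorem for extrinsic sample means recorded in Proposition~\ref{p:vw-mean}. First I would apply that proposition to the $j_1$-nonfocal sample to conclude that, with probability one and $n$ large, the extrinsic sample mean $\bar X_{n,E}$ exists and satisfies
\begin{equation*}
\sqrt{n}\, tan_{j_1(\mu_E)}\big(j_1(\bar X_{n,E}) - j_1(\mu_E)\big) \to_d \mathcal G,
\end{equation*}
with $\mathcal G$ Gaussian of covariance $\Sigma$ in $T_{j_1(\mu_E)}j_1(\mathcal M_1)$. Transporting this through the chart isomorphism $d_{\mu_E}^{-1}$, one obtains a tangent-vector estimator of $\mu_E$ on $\mathcal M_1$ that is $\sqrt n$-asymptotically normal; this is the input to which the delta method is applied. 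Note that the quantity of interest is $j_2(\varphi(\bar X_{n,E}))$, the embedded image of the smooth map $\varphi$ evaluated at the estimator itself, so no second extrinsic-mean computation is required and the argument is a single application of the chain rule.

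Next I would linearise the composite map $\psi = j_2\circ\varphi$ at $\mu_E$. Writing $h_n = \bar X_{n,E} - \mu_E$ in a chart centred at $\mu_E$, Fr\'echet differentiability of $\varphi$ and of $j_2$ (Definition~\ref{differentiable-banach}) together with the chain rule give
\begin{equation*}
j_2(\varphi(\bar X_{n,E})) - j_2(\varphi(\mu_E)) = \big(d_{\varphi(\mu_E)}j_2 \circ d_{\mu_E}\varphi\big)(h_n) + \omega(h_n),
\end{equation*}
where $\|\omega(h_n)\| = o(\|h_n\|)$. Since the first step gives $\|h_n\| = O_p(n^{-1/2})$, the remainder obeys $\sqrt n\,\omega(h_n) = o_p(1)$ and drops out in the limit. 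Applying the tangential projection $tan_{j_2(\varphi(\mu_E))}$, and invoking Slutsky together with the continuous mapping theorem, transfers the Gaussian limit of $\sqrt n\,h_n$ through the bounded linear operator $d_{\varphi(\mu_E)}j_2\circ d_{\mu_E}\varphi$, producing a centred Gaussian limit $Y$ in $T_{j_2(\varphi(\mu_E))}j_2(\mathcal M_2)$.

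It then remains only to identify the covariance of $Y$. A centred Gaussian with covariance operator $\Sigma$ on $T_{\mu_E}\mathcal M_1$, pushed through the differential $d_{\mu_E}\varphi$, acquires a covariance obtained by sandwiching $\Sigma$ between $d_{\mu_E}\varphi$ and its adjoint $d_{\mu_E}\varphi^{*}$, which in the authors' notation is written $d_{\mu_E}\varphi^{*}\,\Sigma\,d_{\mu_E}\varphi$; the tangential projection at $j_2(\varphi(\mu_E))$ is exactly what isolates the component whose law this covariance describes, matching the asserted $Y\sim\mathcal N(0, d_{\mu_E}\varphi^{*}\Sigma\, d_{\mu_E}\varphi)$.

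The main obstacle is not the formal bookkeeping of differentials but the analytic control of the remainder $\omega(h_n)$ in infinite dimensions: one must check that Fr\'echet differentiability yields $o(\|h_n\|)$ along the random sequence $h_n$ (a stochastic-equicontinuity point that is automatic in finite dimensions but needs care here), and that the $j_2$-nonfocality of $\varphi(X_1)$ guarantees the nearest-point projection $P_{j_2}$ is well defined and differentiable near $\varphi(\mu_E)$, so that the tangent decomposition and the operator $d_{\varphi(\mu_E)}j_2\circ d_{\mu_E}\varphi$ are stable. Dispatching this delicacy, rather than any of the algebra, is where the genuine work lies---which is presumably why the authors leave the verification to the reader.
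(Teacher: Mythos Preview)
The paper does not prove this theorem: it explicitly states ``the proof is left to the reader'' and moves on. Your sketch is precisely the kind of delta-method argument the authors presumably have in mind---applying Proposition~\ref{p:vw-mean} to obtain $\sqrt{n}$-asymptotic normality of $\bar X_{n,E}$ in the tangent space, then linearising $j_2\circ\varphi$ via the chain rule and pushing the Gaussian limit through the differential---and you correctly flag the infinite-dimensional remainder control as the only point requiring genuine care. There is nothing to compare against; your proposal is an appropriate reconstruction of the omitted argument.
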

{
We can now define the neighborhood hypothesis procedure for tests of extrinsic means. Assume $\Sigma_j$ is the extrinsic covariance operator of a random object $X$ on the Hilbert manifold $\mathcal M,$ with respect to the embedding $j:\mathcal M \to \mathbb H.$ Let $\mathbf M_0$ be a compact submanifold of $\mathcal M.$  Let $\varphi_0: \mathcal M \to \mathbb R$ be the function
\begin{equation}\label{eq:phi0}
\varphi_0(p) = \min_{p_0 \in \mathbf M_0}\|j(p) - j(p_0)\|^2,
\end{equation}
and let $\mathbf M_0^\delta, \mathbf B_0^\delta$ be given respectively by
\begin{eqnarray}\label{eq:m0delta}
\mathbf M_0^\delta = \{p \in \mathcal M, \varphi_0(p) \le \delta^2 \}, \nonumber \\
\mathbf B_0^\delta = \{p \in \mathcal M, \varphi_0(p) = \delta^2,  \}.
\end{eqnarray}
Since $\varphi_0$ is Fr\'echet differentiable and all small enough $\delta >0$ are regular values of $\varphi_0,$ it follows that $\mathbf B_0^\delta $ is a Hilbert submanifold of codimension one in $\mathcal M.$ Let $\nu_p$ be the normal space at a points $p \in \mathbf B_0^\delta,$ orthocomplement of the tangent space to $\mathbf B_0^\delta $ at $p.$ We define $\mathbf B_0^{\delta,X}$
\begin{equation}\label{eq:b-delta-x-0}
\mathbf B_0^{\delta,X} = \{p \in \mathbf B_0, \Sigma_j|_{\nu_p} \text{is positive definite}  \}.
\end{equation}
\begin{definition}\label{def:neigh} The neighborhood test consists of testing the following two hypotheses:
\begin{eqnarray}\label{eq:m0delta}
H_0 : \mu_E \in \mathbf M_0^\delta \cup \mathbf B_0^{\delta,X}, \nonumber \\
H_A : \mu_E \in (\mathbf M_0^\delta)^c \cap (\mathbf B_0^{\delta,X})^c.
\end{eqnarray}
\end{definition}
Munk et al. (2008)
$\cite{MuPaPaPaRu:2008}$ show that, in general, the test statistic
for these types of hypotheses has an {asymptotically} standard
normal distribution for large sample sizes, in the case of random objects on Hilbert spaces.
Here, we consider neighborhood hypothesis testing for the particular situation in which the
submanifold $\mathbf M_0$ consists of a point $m_0$ on $\mathcal M.$   We set
$\varphi_0 = \varphi_{m_0},$ and since $T_{m_0}\{ m_0\} = 0$ we will prove the following result.
\begin{theorem} \label{t:m0delta} If $M_0 = \{m_0\},$ the test
statistic for the hypotheses specified in (\ref{eq:m0delta}) has an asymptotically
standard normal distribution and is given by:
\begin{equation}
\label{test-stat} { T_n} = \sqrt{n} \{
\varphi_{m_0}(\hat{\mu}_E)-\delta^2 \} /s_n,
\end{equation}
where  \begin{equation} \label{s-nu} s_n^2 = 4 \langle \hat{\nu} ,
S_{E,n} \hat{\nu} \rangle\end{equation} and
\begin{eqnarray} \label{extrinsic-s-cov} S_{E,n}=\frac{1}{n} \sum_{i=1}^{n}
(\tan_{\hat{\tilde{\mu}}}d_{\overline{j(X)}_n}P_j(j(X_i) -
\overline{j(X)}_n))\otimes \nonumber \\
\otimes(\tan_{\hat{\tilde{\mu}}}d_{\overline{j(X)}_n}P_j(j(X_i) -
\overline{j(X)}_n))\end{eqnarray}
is the extrinsic sample covariance operator for $\{X_i\}^n_{i=1}$, and
\begin{equation}\label{hat-nu}\hat{\nu} =
(d_{\hat{\mu}_{E,n}}j)^{-1}\widehat{tan}_{j(\hat{\mu}_{E,n})}
(j(m_0)-j(\hat{\mu}_{E,n})).
\end{equation}
\end{theorem}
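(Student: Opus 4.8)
The plan is to recognize $T_n$ as a studentized instance of the extended delta method of Theorem~\ref{t:Cramer-ext}, applied to the scalar functional $\varphi_{m_0}$, and then to verify that $s_n^2$ consistently estimates the resulting asymptotic variance. Since $\mathbf M_0=\{m_0\}$, the minimum in \eqref{eq:phi0} is over the single point $m_0$, so $\varphi_{m_0}(p)=\|j(p)-j(m_0)\|^2$, which is Fr\'echet differentiable with $d_{\mu_E}\varphi_{m_0}(v)=2\langle d_{\mu_E}j(v),\,j(\mu_E)-j(m_0)\rangle$ on $T_{\mu_E}\mathcal M$. I would carry out the argument at the boundary configuration $\mu_E\in\mathbf B_0^{\delta,X}$, where by definition $\varphi_{m_0}(\mu_E)=\delta^2$, so that the centering constant $\delta^2$ in \eqref{test-stat} is exactly $\varphi_{m_0}(\mu_E)$ and the statistic is properly centered under the null boundary. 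Applying Theorem~\ref{t:Cramer-ext} with $\mathcal M_2=\mathbb R$ and $j_2$ the identity (for which the tangential projection and the nonfocality requirement are trivial) then yields $\sqrt n\{\varphi_{m_0}(\hat\mu_E)-\delta^2\}\to_d\mathcal N(0,\sigma^2)$ with $\sigma^2=d_{\mu_E}\varphi_{m_0}^{*}\,\Sigma\,d_{\mu_E}\varphi_{m_0}$.

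Next I would identify this variance explicitly. Writing $a=j(\hat\mu_E)-j(\mu_E)$ and $c=j(\mu_E)-j(m_0)$, the numerator expands as $\varphi_{m_0}(\hat\mu_E)-\delta^2=\|a\|^2+2\langle a,c\rangle$; the square term is $O_p(1/n)$ and drops out, and since the normal component of $a$ is of order $O_p(1/n)$ (a fact inherited from the construction behind Proposition~\ref{p:vw-mean}), only the tangential part $w:=\tan_{j(\mu_E)}(j(\mu_E)-j(m_0))$ of $c$ contributes to leading order. As $d_{\mu_E}j$ kills no tangential content and is an isometry onto its image, the Riesz representer of $d_{\mu_E}\varphi_{m_0}$ is $-2\nu$ with $\nu:=(d_{\mu_E}j)^{-1}\tan_{j(\mu_E)}(j(m_0)-j(\mu_E))$, so that
\[
\sigma^2=d_{\mu_E}\varphi_{m_0}^{*}\,\Sigma\,d_{\mu_E}\varphi_{m_0}=4\langle w,\Sigma_{j,E}w\rangle=4\langle\nu,\Sigma_{j,E}\nu\rangle ,
\]
the population analogue of \eqref{s-nu}. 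The vector $\nu$ is, up to scale, the generator of the normal line $\nu_{\mu_E}$ to $\mathbf B_0^\delta$, so the hypothesis $\mu_E\in\mathbf B_0^{\delta,X}$, i.e.\ positive definiteness of $\Sigma_j|_{\nu_{\mu_E}}$, is exactly what forces $\sigma^2>0$, while regularity of $\delta$ as a value of $\varphi_0$ guarantees $w\neq 0$.

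It then remains to show $s_n^2\to_p\sigma^2$ and to conclude by Slutsky's theorem. I would invoke strong consistency of the extrinsic sample mean, $\hat\mu_{E,n}\to\mu_E$, together with continuity of $j$, $dj$, $P_j$, and the tangential projection, to obtain $\hat\nu\to\nu$ from \eqref{hat-nu}; combined with consistency of the extrinsic sample covariance operator $S_{E,n}\to\Sigma_{j,E}$ from \eqref{extrinsic-s-cov}, the continuous mapping theorem gives $s_n^2=4\langle\hat\nu,S_{E,n}\hat\nu\rangle\to_p 4\langle\nu,\Sigma_{j,E}\nu\rangle=\sigma^2>0$. Slutsky's theorem then delivers $T_n\to_d\mathcal N(0,1)$ at the boundary $\mathbf B_0^{\delta,X}$, which is the regime that calibrates the neighborhood test.

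The hard part will be the infinite-dimensional justifications that are harmless in the finite-dimensional theory of Bhattacharya and Patrangenaru: establishing rigorously that the normal/curvature contribution to $a$ is genuinely $O_p(1/n)$ and that $S_{E,n}$ is consistent for $\Sigma_{j,E}$ in the appropriate Hilbert--Schmidt sense, both of which rely on the Hilbert-space CLT of Proposition~\ref{p:vw-mean} together with Fr\'echet differentiability and local Lipschitz control of the projection $P_j$ near $\mu$, rather than on a finite Taylor expansion. A secondary bookkeeping obstacle is the identification of $\Sigma_{j,E}$ as an operator on $T_{j(\mu_E)}j(\mathcal M)$ with its representation in the pulled-back basis $\{f_a(\mu_E)\}$, which is what makes the limiting variance $4\langle w,\Sigma_{j,E}w\rangle$ coincide with the explicit quadratic form $4\langle\nu,S_{E,n}\nu\rangle$ defining $s_n^2$.
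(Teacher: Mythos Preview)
Your proposal is correct and follows essentially the same route as the paper: compute the differential $d_{\mu_E}\varphi_{m_0}(u)=2\langle d_{\mu_E}j(u),j(\mu_E)-j(m_0)\rangle$, apply the delta method of Theorem~\ref{t:Cramer-ext} at the boundary $\varphi_{m_0}(\mu_E)=\delta^2$ to obtain asymptotic normality with variance $\sigma^2=4\langle\nu,\Sigma_E\nu\rangle$, and conclude via consistency of $\hat\nu$, $S_{E,n}$ and Slutsky's theorem. Your intermediate expansion with $a$ and $c$ and the discussion of the normal component being $O_p(1/n)$ are not needed once you invoke Theorem~\ref{t:Cramer-ext} directly (as the paper does), but they do no harm; the paper simply asserts the consistency of $\hat\nu$ and $s_n^2$ without the infinite-dimensional caveats you raise.
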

{\bf Proof.}  The function $\varphi_0$ given in equation \eqref{eq:phi0} defined on $\mathcal M$ can be written as a composite function $\varphi_0 = \Phi_A \circ j,$ where $\Phi_A(x) = \|x - A\|^2$ is differentiable on
 $\mathbb H \backslash \{A\},$ with the differential at $x$
 given by
$ d_x\Phi_A(y) = 2< y, x - A>. $
 Since $j(\mathcal M)$ is a
 submanifold of $\mathbb H,$ the restriction $\phi_A$ of $\Phi_A(x)$ to
 $j(\mathcal M)$ is a differentiable function, with the differential
\begin{equation}\label{ed:differential-sq-dist}
 d_p\phi_A(y) = 2< y, p - A>, \forall y \in T_p j(\mathcal M).
 \end{equation}
Note that $\varphi_{m_0}(p) = \phi_{j({m_0})}(j(p)),$ therefore,
given that the differential $d_{p}j$ is a vector space
isomorphism, we obtain
\begin{equation}\label{ed:diff-ext-sq-dist}
 d_{p}\varphi_{m_0}(u) = 2< d_{p}j(u), j(p) - j(m_0)>, \forall u \in T_{p}
\mathcal M,
 \end{equation}
 and in particular
\begin{equation}\label{ed:diff-ext-sq-dist-at-mu-e}
 d_{\mu_E}\varphi_{m_0}(u) = 2< d_{\mu_E}j(u), j(\mu_E) - j(m_0)>, \forall u
 \in T_{\mu_E}
\mathcal M,
 \end{equation}
 that is
 \begin{equation}\label{ed:diff-ext-sq-dist-at-mu-e-tan}
 d_{\mu_E}\varphi_{m_0} = 2 d_{\mu_E}j \otimes tan ( j(\mu_E) -
 j(m_0)).
 \end{equation}
 Since the null hypothesis \eqref{eq:m0delta} is accepted as long as
 $\varphi_{m_0}(\mu_E) < \delta^2,$ we derive the asymptotic distribution
 of $ \varphi_{m_0}(\hat{\mu}_E)$ under $\varphi_{m_0}(\mu_E) = \delta^2. $
From Proposition \ref{t:Cramer-ext}, it follows that
\begin{equation}\label{e:as-phim0hat}
\sqrt{n}(\varphi_{m_0}(\hat{\mu}_E)- \varphi_{m_0}({\mu}_E)) \to_d
Y,
\end{equation}
where $Y \sim \mathcal N(0,(d_{\mu_E}\varphi_{m_0})^*\Sigma_E
d_{\mu_E}\varphi_{m_0}),$ we see that the random variable
\begin{equation}\label{eq:z}
Z_n = \frac{\sqrt{n}(\varphi_{m_0}(\hat{\mu}_E)-
\varphi_{m_0}({\mu}_E))}{\sqrt{(d_{\mu_E}\varphi_{m_0})^*\Sigma_E
d_{\mu_E}\varphi_{m_0}}} \end{equation} has asymptotically a standard
normal distribution. From equation
\eqref{ed:diff-ext-sq-dist-at-mu-e-tan}, if we set
\begin{eqnarray}\label{eq:nu}
\nu = (d_{\mu_E}j)^{-1}tan ( j(\mu_E) -
 j(m_0)),\nonumber\\
 \sigma^2 = 4<\nu, \Sigma_E\nu>,
 \end{eqnarray}
 then
\begin{equation}\label{eq:z-nu}
Z_n = \frac{\sqrt{n}(\varphi_{m_0}(\hat{\mu}_E)- \delta^2)}{\sigma}
\end{equation}
Finally we notice that $\hat{\nu}$ in \eqref{hat-nu} is a consistent
estimator of $\nu$ in \eqref{eq:nu} , therefore $s_n^2$ in equation
\eqref{s-nu} is a consistent estimator of $\sigma^2$ in equation
\eqref{eq:nu} and from Slutsky's theorem it follows that the test
statistic $T_n$ in equation \eqref{test-stat} has asymptotically a
$\mathcal N(0,1)$ distribution.
 \section{{  Similarity shape space of planar contours \label{inf-shape-space}}}
\par
{  Features extracted from digital images are represented by planar subsets of unlabeled points.
If these subsets are uncountable, the labels can be assigned in infinitely many ways. Here} we will consider only
contours, which are unlabeled boundaries of 2D topological disks in the plane. To keep
the data analysis stable, and to assign a {\em unique} labeling, we make the {\em
generic} assumption that there is a unique point
$p_0$ on such a contour at the maximum distance to its center of
mass so that the label of any other point $p$ on the contour is the
counterclockwise travel time at constant speed from $p_0$ to $p$.
As such, the total time needed to travel from $p_0$ to itself around
the contour once is the length of the contour. Therefore we consider
direct similarity shapes of nontrivial contours in the plane as described here.
A contour $\tilde \gamma$ is {  then regarded as} the range of a piecewise
differentiable function, that is parameterized by arclength, i.e.
$\gamma: [0, L] \rightarrow \mathbb C, \gamma(0) = \gamma(L)$ and
is one-to-one on $[0, L).$ Recall that the length of a piecewise
differentiable curve $\gamma : [a, b] \to \mathbb R^2$ is defined
as follows:
\begin{equation}\label{arc-length}
l(\tilde \gamma) = \int_a^{b}
\|\frac{\mathrm{d}\gamma}{\mathrm{d}t}(t) \| \mathrm{d}t,
\end{equation} and its center of mass (mean of a uniform distribution on
$\tilde \gamma$) is given by
\begin{equation}
z_{\tilde \gamma} = \frac{1}{L}\int_\gamma z \mathrm{d}s.
\end{equation}
 The contour ${\tilde
\gamma}$ is said to be regular if $\gamma$ is a simple closed curve
and there is a unique point $z_0 = argmax_{z \in {\tilde
\gamma}}\|z-z_{{\tilde \gamma}}\|.$ \par {  A direct similarity is a complex polynomial function in one variable of degree one.} Two contours $\tilde
\gamma_1, \tilde \gamma_2$ {\it have the same direct similarity
shape} if there is a direct similarity $S : \mathbb C \to \mathbb
C,$ such that $S(\tilde \gamma_1) = \tilde \gamma_2.$
{  The centered contour $\tilde \gamma_0 = \tilde \gamma -
z_{\tilde \gamma} = \{z - z_{\tilde \gamma}, z \in \tilde \gamma\} $
has the same direct similarity shape as $\tilde \gamma.$
\begin{definition}\label{center}{ Two regular
contours $\tilde \gamma_1, \tilde \gamma_2$ have the
same similarity shape if $\tilde \gamma_{2,0}=\lambda\tilde
\gamma_{1,0},$ where $\lambda$ is a nonzero complex number.}
\end{definition}
In order to construct the space of direct similarity shapes, we note the following.}
\begin{remark}\label{hilbert}{ A function $\gamma: S^1 \to \mathbb C$ is {\it centered}
if
$\int_{S^1}\gamma(z)ds = 0.$ We consider regular contours since the
complex vector space spanned by centered functions $\gamma$ yielding
regular contours $\tilde \gamma$ is a pre-Hilbert space. Henceforth,
we will be working with the closure of this space. This Hilbert
space ${\bf H}$ can and will be identified with the space of all
measurable square integrable centered functions from $S^1$ to
$\mathbb C.$ }
\end{remark}
{
Let $\Sigma_2^{reg}$ be the set of all direct similarity shapes
of regular contours, which is the same as the space of all shapes of
regular contours centered at zero.
\begin{remark} \label{r:key}
{  From Definition \ref{center} and Remark \ref{hilbert}, we associate a unique
piecewise differentiable curve $\gamma$ to a contour $\tilde{\gamma}$
by taking $\gamma (0) = z_0,$ the point at the maximum
distance to the center of $C,$ and by parameterizing
$\gamma$ using arc length in the counter clockwise direction. Therefore
$\Sigma_2^{reg}$ is a dense and open subset of $P({\bf H})$, the
projective space corresponding to the Hilbert space ${\bf H}.$
Henceforth, to simplify the notation, we will omit the symbol \
$\tilde{}$ in $\tilde{\gamma}$ and identify a regular contour with
the associated closed curve, without confusion.}
\end{remark}
}
\section{The Extrinsic Sample Mean Direct Similarity Shape and its Asymptotic Distribution}
Note, {  from Example 2, that $P({\bf
H})$ is a Hilbert manifold which is VW-embedded in the Hilbert space
$\mathcal{L}_{HS}, $ and, from Proposition \ref{p:vw-mean}, the  VW mean $\mu_E$ of a r.o. $X=[\Gamma]$ in $P({\bf
H}),$ is $[e_1],$
where $e_1$ is the eigenvector corresponding to the largest eigenvalue
of $\mu = E(\frac{1}{\|\Gamma\|^2}\Gamma \otimes \Gamma).$
\begin{proposition}\label{p:vw-sample-mean}
Given any VW-nonfocal probability measure $Q$ on $P({\bf
H}),$ then if $\gamma_1, \dots, \gamma_n$ is a random sample from
$\Gamma,$ then, for $n$ large enough, the VW sample mean $\hat{\mu}_{E,n}$
is the projective point of the eigenvector corresponding to the largest
eigenvalue of ${1 \over n} \sum_{i=1}^n \frac{1}{\| \gamma_i\|^2}
\gamma_i \otimes \gamma_i.$
\end{proposition}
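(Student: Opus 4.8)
The plan is to recognize the VW sample mean as the (population) VW mean of the empirical distribution and then quote the characterization of the VW mean already proved. Let $Q_n = \frac{1}{n}\sum_{i=1}^n \delta_{[\gamma_i]}$ be the empirical measure on $P(\mathbf{H})$ attached to the sample. By definition the extrinsic sample mean $\hat{\mu}_{E,n}$ minimizes the sample Fr\'echet function $\frac{1}{n}\sum_{i=1}^n \|j(\cdot) - j([\gamma_i])\|^2$, which is exactly the Fr\'echet function of $Q_n$; so $\hat{\mu}_{E,n}$ is the VW mean of $Q_n$. Under $Q_n$ the relevant mean operator is $E_{Q_n}(\frac{1}{\|\Gamma\|^2}\Gamma\otimes\Gamma) = K_n := \frac{1}{n}\sum_{i=1}^n \frac{1}{\|\gamma_i\|^2}\gamma_i\otimes\gamma_i$. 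Applying the VW-mean characterization (Proposition \ref{ext}) to $Q_n$, the VW mean of $Q_n$ exists precisely when $K_n$ has a simple largest eigenvalue, and in that case equals $[\gamma]$ for $\gamma$ a corresponding eigenvector. This is the asserted conclusion, so it remains only to verify that, with probability one, $K_n$ has a simple largest eigenvalue once $n$ is large enough.

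For this last point I would argue by a spectral-stability estimate. Each summand $\frac{1}{\|\gamma_i\|^2}\gamma_i\otimes\gamma_i$ is a rank-one self-adjoint Hilbert--Schmidt operator of unit HS norm, so the i.i.d. terms are uniformly bounded and Mourier's strong law of large numbers in the separable Hilbert space $\mathcal{L}_{HS}$ yields $K_n \to \mu$ almost surely in HS norm, where $\mu = E(\frac{1}{\|\Gamma\|^2}\Gamma\otimes\Gamma)$. Because $Q$ is VW-nonfocal, the same characterization shows that $\mu$ has a simple largest eigenvalue, i.e. a spectral gap $\delta_1^2 > \delta_2^2$ between its two top eigenvalues. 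Since the eigenvalues of a compact self-adjoint operator are Lipschitz in the operator norm (and hence in the HS norm) by the Weyl--Lidskii perturbation inequality, the two largest eigenvalues of $K_n$ converge almost surely to $\delta_1^2$ and $\delta_2^2$; the gap is therefore preserved for all $n$ large enough, forcing the top eigenvalue of $K_n$ to be simple. Equivalently, one may simply invoke the well-definedness assertion already contained in Proposition \ref{p:vw-mean}.

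The only genuine obstacle is precisely this spectral-gap preservation: a finite-sample operator need not have a simple leading eigenvalue, and the ``for $n$ large enough'' qualifier is exactly what the nonfocality hypothesis buys through the almost-sure convergence $K_n \to \mu$ and continuity of the spectrum. Everything else is a direct transcription of the population statement to the empirical measure, so I expect the write-up to be short once the perturbation argument (or the citation to Proposition \ref{p:vw-mean}) is in place.
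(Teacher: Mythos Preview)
Your proposal is correct and matches the paper's implicit reasoning: the paper states this proposition without an explicit proof, evidently regarding it as the specialization of the VW-mean characterization (the second Proposition~\ref{ext}) to the empirical measure, together with the well-definedness clause of Proposition~\ref{p:vw-mean}. Your write-up makes this explicit, and your spectral-gap argument via HS-norm SLLN and Weyl--Lidskii is exactly the content hidden behind the paper's citation of Proposition~\ref{p:vw-mean}; nothing further is needed.
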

}
{  We can now derive the asymptotic distribution of $\hat{\mu}_{E,n}$
based upon the general formulation specified in Proposition \ref{p:vw-mean}.
The asymptotic distribution of $\overline{j(X)}_n$ is as follows:
\begin{equation} \label{asymp-dist1}
\sqrt{n}(\overline{j(X)}_n- \mu ) \to_d \mathcal G
 \; \; \mathrm{as } \;  n \rightarrow
\infty,
\end{equation}
where $\mathcal G$ has a Gaussian
distribution $N_{\mathcal{L}_{HS}}(0,\Sigma)$ on $\mathcal{L}_{HS}$  a zero mean and covariance operator
$\Sigma$. From Proposition \ref{ext}, it follows that the projection $P_j:
\mathcal{L}_{HS} \rightarrow j(P(\mathbf{H})) \subset
\mathcal{L}_{HS}$ is given by
\begin{equation}\label{proj}
P_j(A)=\nu_A \otimes \nu_A,
\end{equation}
where $\nu_A$ is the eigenvector of norm 1 corresponding to the
largest eigenvalue of $A$, $P_j(\mu)=j(\mu_E),$ {  and $P_j(\overline{j(X)}_n)= j(\hat{\mu}_{E,n})$ }.
Applying the delta method to (\ref{asymp-dist1}) yields
\begin{equation} \label{asymp-dist2}
\sqrt{n}(j(\hat{\mu}_{E,n})-j(\mu_E)) \to_d
N_{\mathcal{L}_{HS}}(0,\mathrm{d}_{\mu}P_j \Sigma
(\mathrm{d}_{\mu}P_j)^T),
\end{equation}
as $n \rightarrow \infty,$ {  where $\mathrm{d}_{\mu}P_j$ denotes the
differential, {as in Definition \ref{differentiable-banach},} of the projection $P_j,$ evaluated at $\mu.$}
 It remains to find the expression for
$\mathrm{d}_{{\mu}}P_j$.
To determine the formula for the differential, we must consider the
equivariance of the embedding $J$. Because of this, we may assume
without loss of generality that $\mu =
diag\{{\delta}_a^2\}_{a=1,2,3,\dots}$.  As defined previously, the
largest eigenvalue of $\mu$ is a simple root of the characteristic
polynomial, with $e_1$ as the corresponding complex eigenvector of
norm 1, where $\mu_E = [{e}_1].$
{  An orthobasis for  $T_{[ e_1]}P(\mathbf{H})$
is formed by $e_a, i e_a,$ for $ a = 2, 3, \dots$, where $e_a$ is the
eigenvector over $\mathbb{R}$ that corresponds to the $a$-th eigenvalue.}
For any $\gamma$ which is orthogonal to ${e}_1$ w.r.t. the real scalar product, we
define the path $\psi_\gamma(t)=[\cos(t){e}_1 +\sin(t) \gamma]$.
Then $T_{j([{e}_1])}j(P(\mathbf{H}))$ is generated by the vectors
tangent to such paths at $t=0$.  Such vectors have the form $\gamma
\otimes {e}_1 + {e}_1 \otimes \gamma$. In particular, since the
eigenvectors of $\mu$ are orthogonal w.r.t. the complex scalar
product, we may take $\gamma = e_a,a=2,3,\dots,$ or $\gamma=i e_a,
a = 2,3,\dots$ to get an orthobasis for
$T_{j([{e}_1])}j(P(\mathbf{H})).$ Normalizing these vectors to have
unit lengths, we obtain the following orthonormal frame for
$a=2,3,\dots$:
{
\begin{align}
d_{{\mu}} j(e_a)&=2^{-1/2} (e_a \otimes {e}_{1} + {e}_{1}\otimes e_a),\\
d_{{\mu}} j(i e_a)&=i2^{-1/2} (e_a \otimes {e}_{1} + {e}_{1}\otimes
e_a),
\end{align}
}
As stated previously, since the map $j$ is equivariant, we may
assume that $\overline{j(X)}_n$ is a diagonal operator $D,$ with the
eigenvalues $\delta_1^2 > \delta_2^2 \geq ...$ In this case,
\begin{align}
d_{{\mu}_E}j(e_a)&=2^{-1/2} E_a^1 = F_a^1,\\
d_{{\mu}_E}j(i e_a)&=i2^{-1/2} E_a^1 = iF_a^1,
\end{align}
where $E_a^b$ has all entries zero except those in the positions
$(a,b)$ and $(b,a)$ that are all equal to 1.  From these
formulations and {   computations of the differential of $P_j$ in the
finite dimensional case} in Bhattacharya and Patrangenaru (2005),
it follows that $\mathrm{d}_{D} P_j(E_a^b) = 0,$ for all values
$a \leq b,$ except for $a=1 <b,$.  In this case
\begin{equation}\label{dp}
\mathrm{d}_{D} P_j(F_1^b) = \frac{1}{\delta_1^2 - \delta_b^2}F_1^b,
\mathrm{d}_{D} P_j(iF_1^b) = \frac{1}{\delta_1^2 - \delta_b^2}iF_1^b.
\end{equation}
Equation \eqref{dp} implies that the differential of the projection
$P_j$ at $\mu$ is the operator $Q_1$ given by
\begin{equation}
Q_1=\sum_{k=2}^\infty \frac{1}{ \delta^2_1 - \delta^2_k} E_k,
\end{equation}
where $\delta^2_1, \delta^2_2, \dots$ are the eigenvalues of
$E(\frac{1}{\|\Gamma\|^2}\Gamma \otimes \Gamma)$ and $E_1, E_2,
\dots$ are the corresponding eigenprojections. Also, in this
situation, $\mathcal{G}$ is a normally distributed random element in
$\mathcal{L}_{HS}.$ This results in the tangential component of {  the }
difference between the $j$ - images of the VW sample mean and of the
VW mean having an asymptotic normal distribution, albeit with
a degenerate covariance operator. From these computations, the
asymptotic distribution of this difference can be expressed more
explicitly in the following manner.
\begin{equation}\label{eq:asymp-distr-vw-sample-mean}
\sqrt{n} (tan(j(\hat{\mu}_{E,n})-j(\mu_E))) \xrightarrow{d} Q_1
\mathcal{G},
\end{equation}
{  where $tan(v)$ is the tangential component of $v \in j(P(\mathbf{H}))$
with respect to the basis $e_a(P_j(\mu)) \in T_{P_j(\mu)} j(P(\mathbf{H}))$,
for $a = 2, 3, \dots$
and is expressed as
\begin{equation} \label{tan-comp}
tan(v)=(e_2(P_j(\mu))^T v, e_3(P_j(\mu))^T v,, \dots)^T.
\end{equation}
}
However, this result cannot be used directly because $Q_1$, which is
calculated using the eigenvalues of $E(\frac{1}{\|\Gamma\|^2} \Gamma
\otimes \Gamma)$, and $\mu_E$ are unknown.  This problem is solved
by estimating $\mu_E$ by $\hat{\mu}_{E,n}$ and $Q_1$ in the
following manner.
\begin{equation}
\hat{Q}_1=\sum_{k=2}^\infty \frac{1}{ \hat{\delta}^2_1 -
\hat{\delta}^2_k}\hat{E}_k,
\end{equation}
where $\hat{\delta}_1, \hat{\delta}_2, \dots$ are the eigenvalues of {
\begin{equation}\label{eq:hat-mu}\hat \mu = {1 \over n} \sum_{i=1}^n \frac{1}{\|\gamma_i\|^2} \gamma_i \otimes
\gamma_i
\end{equation}
} and $\hat{E}_1, \hat{E}_2,\dots$ are the corresponding
eigenprojections. Using this estimation, the asymptotic distribution
is as follows:
\begin{equation}
\sqrt{n} (\widehat{tan}(j(\hat{\mu}_{E,n})-j(\mu_E)))
\overset{d}{\approx} \hat{Q}_1 \mathcal{G},
\end{equation}
where ``$\overset{d}{\approx}$" denotes approximate equality in distribution
and $\widehat{tan}$ is the tangential component relative to the
tangent space of $j(P({\bf H}))$ at $j(\hat{\mu}_{E,n})$ as in \eqref{tan-comp}, {  where
$\mu$ is replaced with $\hat \mu$ in equation \eqref{eq:hat-mu}}
Applying this result to (\ref{asymp-dist2}), we  arrive at the following.
\begin{theorem}\label{t:clt-ext-studentized}
If {  $\Gamma_1,\dots,\Gamma_n$ are i.i.d.r.o.'s from a VW-nonfocal distribution $Q$ on} $P(\mathbf{H})$
with VW extrinsic sample mean $\hat{\mu}_{E,n}$, then
\begin{equation} \label{asymp-dist3}
\sqrt{n}(j(\hat{\mu}_{E,n})-j(\mu_E)) \overset{d}{\approx}
\mathrm{d}_{\hat{\mu}_n}P_j \mathcal{G} \; \; \mathrm{as } \; n
\rightarrow \infty,
\end{equation}
where $\hat\mu_n = \overline{j(X)}_n$ is a consistent estimator of
$\mu.$
\end{theorem}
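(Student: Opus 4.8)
\section*{Proof proposal}

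The plan is to obtain (\ref{asymp-dist3}) as the studentized counterpart of the central limit theorem (\ref{asymp-dist2}), which has already been established by the delta method and which reads $\sqrt{n}(j(\hat{\mu}_{E,n}) - j(\mu_E)) \to_d d_\mu P_j\,\mathcal{G}$ with $d_\mu P_j = Q_1$. Since $Q_1$ is determined by the unknown spectral data $\delta_1^2, \delta_2^2, \dots$ and eigenprojections $E_1, E_2, \dots$ of $\mu$, the content of the theorem is that $Q_1$ may be replaced by its sample analogue $\hat{Q}_1 = d_{\hat{\mu}_n} P_j$ without changing the limiting law, up to the approximation indicated by $\overset{d}{\approx}$. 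The whole argument is therefore a Slutsky-type reduction: write $\sqrt{n}(j(\hat{\mu}_{E,n}) - j(\mu_E)) = d_\mu P_j\,\sqrt{n}(\overline{j(X)}_n - \mu) + o_p(1)$ from the Taylor expansion underlying the delta method, and then show that replacing $d_\mu P_j$ by $d_{\hat{\mu}_n} P_j$ is asymptotically negligible.

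First I would record the consistency $\hat{\mu}_n = \overline{j(X)}_n \to \mu$, which follows from the strong law of large numbers for the i.i.d. Hilbert--Schmidt valued summands $j(X_i) = \frac{1}{\|\Gamma_i\|^2}\Gamma_i \otimes \Gamma_i$, each of which has the finite mean $\mu$ guaranteed by Proposition \ref{ext}. Next I would verify that the map $A \mapsto d_A P_j$ is continuous at $\mu$. Because the distribution $Q$ is VW-nonfocal, the top eigenvalue $\delta_1^2$ of $\mu$ is simple, so by analytic perturbation theory the leading unit eigenvector $\nu_A$ and the associated eigenprojection vary continuously in a neighborhood of $\mu$; since $P_j(A) = \nu_A \otimes \nu_A$, its differential $Q_1 = \sum_{k \ge 2}(\delta_1^2 - \delta_k^2)^{-1} E_k$ depends continuously on these spectral quantities. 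Combining this with the consistency above gives $d_{\hat{\mu}_n} P_j \to d_\mu P_j$, and, using the weak limit (\ref{asymp-dist1}) for $\sqrt{n}(\overline{j(X)}_n - \mu)$, Slutsky's theorem then yields $d_{\hat{\mu}_n} P_j\,\sqrt{n}(\overline{j(X)}_n - \mu) \to_d d_\mu P_j\,\mathcal{G} = Q_1 \mathcal{G}$, which is exactly the assertion (\ref{asymp-dist3}) with $\mathcal{G} \sim N_{\mathcal{L}_{HS}}(0,\Sigma)$.

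The hard part will be making the operator convergence $\hat{Q}_1 \to Q_1$ precise in the infinite-dimensional setting. Although $\mu$ is trace class with $\sum_k \delta_k^2 = \mathrm{tr}(\mu) = 1$, so that the spectral gaps $\delta_1^2 - \delta_k^2$ increase to $\delta_1^2 > 0$ and $Q_1$ is a bounded operator, one must still control the entire tail $k \to \infty$ uniformly: the sample eigenvalues and eigenprojections $\hat{\delta}_k^2, \hat{E}_k$ need to approximate $\delta_k^2, E_k$ well enough, simultaneously over all $k$, for $\hat{Q}_1$ to converge to $Q_1$ in operator norm. This is the reason the conclusion is stated as an approximate equality in distribution $\overset{d}{\approx}$ rather than an exact weak limit; a fully rigorous $\to_d$ would require either a tightness argument in $\mathcal{L}_{HS}$ or a reduction to finite-dimensional projections on which the spectral perturbation is uniformly controlled. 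Everything else is a routine assembly of the delta method, the strong law, and Slutsky's theorem.
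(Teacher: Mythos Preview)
Your proposal is correct and follows essentially the same route as the paper: the theorem is presented there as the culmination of the development immediately preceding it, namely the CLT (\ref{asymp-dist1}), the delta method yielding (\ref{asymp-dist2}), and the replacement of $Q_1=d_\mu P_j$ by its sample version $\hat Q_1=d_{\hat\mu_n}P_j$. The only difference in emphasis is that the paper spends most of its effort on the explicit spectral computation of $d_\mu P_j$ (exploiting equivariance of the VW embedding to reduce to diagonal operators and obtain $Q_1=\sum_{k\ge 2}(\delta_1^2-\delta_k^2)^{-1}E_k$), and then simply asserts the estimation step, whereas you take that formula as input and supply the Slutsky/perturbation justification for the replacement more carefully; your closing paragraph on the difficulty of controlling $\hat Q_1\to Q_1$ uniformly in $k$ is exactly why the paper states the conclusion with $\overset{d}{\approx}$ rather than $\to_d$.
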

\begin{remark}\label{r:defense} It must be noted that because of the
infinite dimensionality of $\mathcal{G}$, in practice, a sample
estimate for the covariance that is of full rank cannot be found.
Because of this issue, this result cannot be properly studentized.
Rather than using a regularizarion technique for the
covariance that leads to complicated shape data computations, we
will drastically reduce the dimensionality via the use of the
neighborhood hypothesis methodology presented in Section 2.2.
This type of approach showed its efficiency in projective shape
analysis of planar curves in Munk et al. (2008).
\end{remark}
\section{  The One-Sample Neighborhood Hypothesis Test for Mean Shape}
Suppose that
$j:P({\bf H}) \to \mathcal{L}_{HS}$ is the {  VW embedding in
\eqref{veronese1}} {  and } $\delta > 0$ is a given positive number.
{  Using the notation in Section 2, we now can apply Theorem
\ref{t:m0delta} to random shapes of regular contours. }
Assume $x_r=[\gamma_r], \|\gamma_r\| = 1, r = 1,\dots,n$ is a random
sample from a {  VW}-nonfocal probability measure $Q.$ Then
equation \eqref{asymp-dist3}
shows that asymptotically the tangential component of the VW-sample
mean around the VW-population mean has a complex multivariate normal
distribution. Note that such a distribution has a Hermitian
covariance matrix (see Goodman, 1963 \cite{Go:1963}), therefore in this setting,
the extrinsic covariance operator and its sample counterpart are
infinite-dimensional Hermitian matrices. In particular, if we extend the CLT for
VW-extrinsic sample mean Kendall shapes in Bhattacharya and
Patrangenaru (2005), to the infinite dimensional case, the
$j$-extrinsic sample covariance {operator}  $S_{E,n},$ when regarded as
an infinite Hermitian complex matrix has the following entries
\begin{eqnarray}\label{ext-samp-cov} S_{E,n,ab}
= n^{-1} (\hat{\delta}^2_{1} -
\hat{\delta}^2_a)^{-1}(\hat{\delta}^2_{1} -
\hat{\delta}^2_b)^{-1}\hspace{3cm}
\\ \sum^n_{r=1} <e_a, \gamma_r><e_b, \gamma_r>^* |<e_{1},
\gamma_r>|^2, a, b = {  2, 3, \dots} \hspace{3cm} \nonumber
\end{eqnarray}
with respect to the complex orthobasis $e_2, e_3, e_4, \dots$ of
unit eigenvectors in {  the tangent space $T_{\hat{\mu}_{E,n}}P({\bf H})$}. Recall that this orthobasis corresponds via the
differential $d_{\hat{\mu}_{E,n}}$ with an orthobasis (over $\mathbb
C$ ) in the tangent space $T_{j(\hat{\mu}_{E,n})}j(P({\bf H})),$
therefore one can compute the components $\hat{\nu}^a$ of
$\hat{\nu}$ from equation \eqref{hat-nu} with respect to $e_2, e_3,
e_4, \dots,$ and derive for $s_n^2$ in \eqref{s-nu} the following
expression
\begin{equation} \label{s-nun} s_n^2 = 4 \sum_{a,b=2}^\infty
S_{E,n,ab}\hat{\nu}^a\overline{\hat{\nu}^b},\end{equation} where
$S_{E,n,ab}$ given in equation \eqref{ext-samp-cov} are regarded as
entries of a Hermitian matrix.
{  The test statistic $T_n$ in equation \eqref{test-stat} is defined on an infinite
dimensional Hilbert manifold. In the next section we will explain how to accurately compute approximations of $T_n$ based on finite dimensional polygonal approximations of the regular contours. }
\section{{  Approximation} of Planar Contours\label{sec_finite}}
{  Ideally, the shapes of planar contours could be
studied directly.  However, when performing computations, it is
necessary to approximate the contour by evaluating the function at
only a finite number of times.  If $k$ such stopping times are
selected, then the linear interpolation of the yielded stopping
points is a $k$-gon $z$, for which each stopping time is a vertex.
As with the contour, $z$ is a one-to-one piecewise differentiable
function that can be parametrized by arclength.  Let $L_k$ denote
the length of the $k$-gon. For $j=1,\dots,k$, let $z(t_j)$ denote
the $j$th ordered vertex, where $t_j \in [0, L_k)$ and
$z(t_1)=z(0)=z( L_k)$. It follows that, for $s \in (0,1)$, the
$k$-gon can be expressed as follows:
\begin{equation} \label{k-gon}
z(sL_k) =
  \begin{cases}
    (t_2 -s L_k) z(0) + sL_k z(t_2) &  0 < sL_k \leq t_2\\
    (t_j -sL_k) z(t_{j-1}) + (sL_k - t_{j-1}) z(t_j) &  t_{j-1} < sL_k \leq t_j \\
    (L_k -sL_k) z(t_{k}) + (sL_k - t_{k}) z(0) &  t_{k} < sL_k < L_k
  \end{cases}
  \end{equation}
for $j=3,\dots,k$.  As such, the space of direct similarity shapes of
non-self-intersecting regular polygons is dense in the space of direct
similarity shapes of regular contours.  Therefore, the theory and methodology
discussed in sections 3 through 5 hold for the shapes of these functions,
as well.  For the purposes of inference using the neighborhood hypothesis,
then, it suffices to use the test statistic as derived previously.
However, when considering these approximations, it is important to choose
the stopping times appropriately so that the contour is well approximated
by the polygon.  Additionally, one must take correspondence across contours
into consideration when working with a sample.  We will first
present an algorithm for choosing stopping times in such a way that
the $k$-gon well represents the contour and converges to it
accordingly.  Following that, we will address considerations for
working with samples.
\subsection{Random Selection of Stopping Times}
To obtain approximations, we propose to randomly select a large
number $k$ of stopping times $t_j$ from the uniform distribution
over $[0,L)$. By doing so, we insure, on one hand, that we
ultimately use a sufficiently large number of vertices so that the
$k$-gon well represents the contour. On the other hand, we ensure
the desired density of stopping points.
In order to maintain the within sample matching for a sample of
regular contours, we first find the point $z_0$ at the largest
distance from the center of the contour and choose that as $z(0)$.
We then randomly select $k-1$ stopping times from the uniform
distribution to form the $k$-gon, making sure to maintain the proper
ordering, preventing the $k$-gon from self-intersecting. This is
accomplished by sorting the selected stopping times in increasing
order.
It is important to choose an appropriate number of stopping times
for the given data.  The selected stopping points will be
distributed fairly uniformly around the contour for large values of
$k$, ensuring that the curve is accurately represented by the
$k$-gon.  However, choosing too many stopping times will needlessly
increase the computational cost of performing calculations.  This
will be most noticeable when utilizing bootstrap techniques to
compute confidence regions for the extrinsic mean shape.  Choosing
too few stopping times, though, while keeping computational cost
down, can be extremely detrimental as the stopping points may not be
sufficiently uniform to provide adequate coverage of the contour.
This can significantly distort the $k$-gon, as shown in
Fig.$\ref{f:rand_dog_multi}$.  In this particular instance, the
200-gon of the dog includes no information about the lower jaw of
the dog and little detail about one of the ears.
\begin{figure}[!ht]
\begin{center}
\includegraphics[scale=0.7]{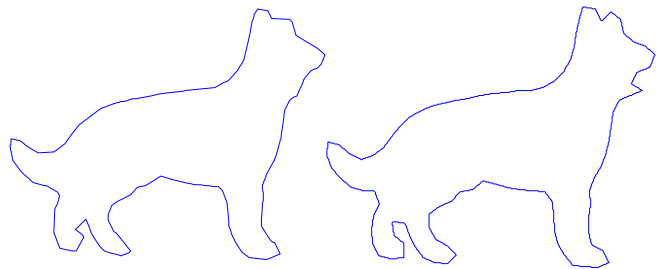}
            \caption{\scriptsize A 200-gon (left) and the associated contour (right) of a dog}
             \label{f:rand_dog_multi}
\end{center}
\end{figure}
The length of the contour $L$ can be used to assist in determining
an appropriate number of stopping points to be chosen. After
selecting an initial set of $k$ stopping times as describe above,
the length $L_k$ of the $k$-gon is
\begin{equation} \label{length-est}
L_k=\sum_{j=2}^{k+1} \|z(t_j) -z(t_{j-1})\|,
\end{equation}
where $z(t_{k+1})=z(0)$. An appropriate lower bound for the number
of stopping points can be determined by randomly selecting times for
various values of $k$. Compute $L_k$ for each of these $k$-gons
using \eqref{length-est} and compute the relative error compared to
$L$. This should be repeated many times to obtain a mean relative
error and standard deviation of the relative error for each value of
$k$ used.  To determine an appropriate number of stopping points to
use, compare the mean relative error to a desired threshold.
Additionally, the distributions of the relative errors could also be
examined.  It should be noted, however, that since digital imaging
data is discrete by nature, the contour will be represented by $K$
pixels.  As such, it is often necessary to replace $L$ by $L_K$, the
length of the closest approximation to the contour, which can be
calculated similarly to $L_k$. }
{
When using this algorithm to select stopping points, it follows that
the $k$-gon will converge to the contour.  However, when selecting
an additional stopping time $t_{j'}$, care must be taken to properly
alter $z$ in such a way that ensures that there is no
self-intersection of the $k$-gon.  To do so, simply reorder the
stopping times in increasing order and apply the resulting
permutation to the stopping points. It follows that, with
probability 1, the length of the $k$-gon between successive stopping
points will converge to 0 as the number of stopping times tends to
infinity.  This can be stated more formally as follows.
\begin{lemma}
If stopping times $s_1, s_2, \dots,s_k$ are selected from a uniform
distribution over $[0, 1)$, then
\begin{equation*}
L_{max} = \max_{j = \overline{2,k+1}} \| z( s_j L_k ) - z( s_{j-1} L_k)
\| \xrightarrow{p} 0.
\end{equation*}
\end{lemma}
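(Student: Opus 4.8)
The plan is to reduce the statement to the classical fact that the maximum spacing of $k$ i.i.d.\ uniform order statistics on $[0,1)$ tends to zero, after using the arc-length parameterization to convert Euclidean chord lengths into differences of these order statistics.

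First I would record the geometric bound coming from the arc-length parameterization. Since the $k$-gon $z$ is parameterized by arc length over $[0,L_k)$, it has unit speed and is therefore $1$-Lipschitz in its parameter; hence for any two parameter values $a,b$ one has $\|z(a)-z(b)\|\le|a-b|$. Writing the sorted stopping times as $s_{(1)}\le\dots\le s_{(k)}$, with the convention $s_{(k+1)}=s_{(1)}+1$ to account for the wrap-around term $z(t_{k+1})=z(0)$, consecutive vertices sit at parameters $s_{(j-1)}L_k$ and $s_{(j)}L_k$, so that
\begin{equation*}
L_{\max}\le L_k\,\max_{j}\bigl(s_{(j)}-s_{(j-1)}\bigr)=L_k\,M_k,
\end{equation*}
where $M_k$ denotes the maximum circular spacing of the sample. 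Because the $k$-gon is inscribed in the contour, each edge is a chord no longer than the corresponding arc, so $L_k\le L$, the finite length of the contour; thus $L_k$ is bounded above by a deterministic constant.

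The core of the argument is then to show $M_k\xrightarrow{p}0$, which I would establish by a covering (occupancy) argument rather than by computing the exact spacing distribution. Fix an integer $m$ and partition $[0,1)$ into $m$ subintervals of equal length $1/m$. If every subinterval contains at least one stopping point, then no gap between consecutive points can exceed $2/m$, since any interval of length greater than $2/m$ would wholly contain one of the grid cells and hence a sampled point. The probability that some cell is empty is bounded by $m(1-1/m)^k$ via a union bound, and for fixed $m$ this tends to $0$ as $k\to\infty$. Consequently, given $\varepsilon>0$, choosing $m>2/\varepsilon$ yields $\mathbb P(M_k>\varepsilon)\le m(1-1/m)^k\to0$, which is exactly $M_k\xrightarrow{p}0$.

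Finally I would combine the two ingredients: $L_{\max}\le L\,M_k$ together with $M_k\xrightarrow{p}0$ and $L$ a fixed constant gives $L_{\max}\xrightarrow{p}0$, since a bounded multiple of a sequence tending to zero in probability also tends to zero in probability. The only genuinely nontrivial step is the maximum-spacing convergence, and the occupancy bound above keeps it elementary and avoids order-statistic density computations. A minor point to handle with care is the circular wrap-around gap implicit in the index range $\overline{2,k+1}$, but the same partition argument controls it, because the covering bound constrains every consecutive gap simultaneously, including the one that closes the contour.
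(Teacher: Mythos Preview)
Your proof is correct and, in fact, more careful than the paper's own argument. Both routes rest on the same idea---the maximum spacing among $k$ i.i.d.\ uniform points on $[0,1)$ tends to zero---but the executions differ. The paper argues directly that $P(L_{\max}>\epsilon)$ equals the probability that all $k$ stopping times fall in an interval of length $1-\epsilon/L_k$, obtaining $(1-\epsilon/L_k)^k\to 0$. This is heuristic in two respects: the stated equality should really be an inequality (one needs a union bound over the possible location of the gap), and $L_k$ is itself random, so treating $(1-\epsilon/L_k)^k$ as a probability is imprecise. Your decomposition sidesteps both issues: the Lipschitz bound $L_{\max}\le L_k M_k$ together with the deterministic inequality $L_k\le L$ removes the randomness of $L_k$ from the tail estimate, and your occupancy argument gives a clean, explicit bound $P(M_k>\varepsilon)\le m(1-1/m)^k$ that is rigorous without appealing to order-statistic densities. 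The trade-off is that the paper's route is shorter in spirit, while yours is longer but airtight; the additional care in handling the wrap-around gap is also a point the paper glosses over.
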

\begin{proof}
\begin{align*}
P( L_{max} > \epsilon ) &= P( \mathrm{All \;} k \mathrm{ \;
stopping \; points \;are  \; within \; the\;remaining } \; L_k -\epsilon ) \\
& = P \left ( \mathrm{All \;} k \mathrm{ \; stopping \; times \;are
\; within \; the\;remaining } \; 1 - \frac{\epsilon}{L_k} \right )
\end{align*}
We can assume without loss of generality that the section of the
$k$-gon for which the distance between successive stopping times is
greater than $\epsilon^*$ is over  the interval $(0, \epsilon)$. In
addition, since the stopping times are independently chosen,
\begin{equation*}
P( L_{max} > \epsilon )=\left (F(1) - F\left(\frac{\epsilon}{L_k}
\right) \right)^k=\left (1-\frac{\epsilon}{L_k} \right)^k
\end{equation*}
where $F$ is the cdf for the uniform distribution over the interval
$[0,1)$. Taking the limit of this expression as $k \rightarrow
\infty$ results in $L_{max} \xrightarrow{p} 0$ since
\begin{equation*} \lim_{k\rightarrow \infty}
P( L_{max}> \epsilon )= \lim_{k \rightarrow \infty} \left
(1-\frac{\epsilon}{L_k} \right)^k=0
\end{equation*} \qedhere
\end{proof}
It follows immediately that the center of mass of the $k$-gon
converges to the center of mass of the contour.  While the $k$-gon
$z$ converges to the contour $\gamma$, it is also of great interest
to consider the convergence of $[z]$ to $[\gamma]$.  Since $z$ and
$\gamma$ are objects in the same space, the disparity in their
shapes can be examined by considering the squared distance $\|
j([z]) - j([\gamma])\|^2$ in $\mathcal{L}_{HS}$.  However, for the
purposes of computational comparisons, it is necessary to evaluate
the functions at $m>k$ times using \eqref{k-gon} and approximate the
distance in $S(m,\mathbb{C})$, the space of self-adjoint $m \times
m$ matrices.
\begin{figure}[!ht]
\centerline{
\includegraphics[scale=0.3]{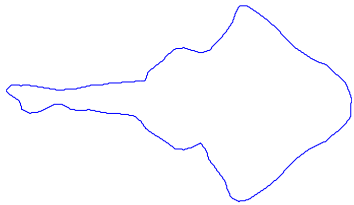}}
\caption{\scriptsize The digital image of the contour of a
stingray.} \label{f:stingray}
\end{figure}
To illustrate, consider the contour considered in Figure
\ref{f:stingray}.  The digital representation of this contour
consists of $K=764$ pixels.  Stopping times were selected using the
above algorithm to form $k$-gons for $k=3,\dots,763$.  Each $k$-gon
was then evaluated at 764 times corresponding to the each of the
pixels on the digital image of the contour.  As such, squared
distances between the $k$-gons and the contour were computed in
$S(764,\mathbb{C})$.  After this was repeated 50 times, the means
and standard deviations of the squared distances were calculated for
each value of $k$ and are shown in Figure \ref{f:rho-sq}.
\begin{figure}[!ht]
\centerline{
\includegraphics[scale=0.4]{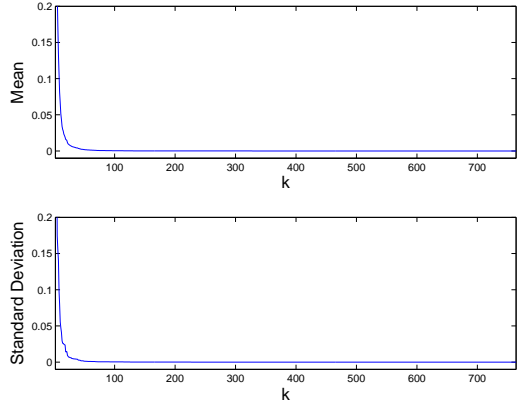}}
\caption{\scriptsize The mean and standard deviation of the squared
distance of shapes of $k$-gons to the shape of the contour, as
evaluated in $S(764,\mathbb{C})$.} \label{f:rho-sq}
\end{figure}
The mean squared distance to the contour converges quickly, showing
that the distance between $[z_k]$ and $[\gamma]$ only diminishes
slightly for $k >100$.  Moreover, the variability introduced by
selecting the stopping times randomly also rapidly approaches 0.  As
such, it is clear that $[\gamma]$ is well approximated using $k<<K$.
However, while the overall shape is well approximated, it is unclear
from this alone how well the details of $\gamma$ are approximated.
As such, using the distance between shapes may not be the best
indicator for determining a lower bound for $k$.  For this purpose,
it may be more helpful to consider $(L_K-L_k)/L_K$, the relative
error in the approximation of the length, as described previously.
For the contour in Figure \ref{f:stingray}, the relative error in
length is shown in Figure \ref{f:L-diff}.  Here, while the
variability approaches 0 quickly, the average relative error
approaches 0 at  a lower rate. As such, if it is desirable to keep
the relative error below 0.05, for this example, no fewer than 300
stopping times should be selected.
\begin{figure}[!ht]
\centerline{
\includegraphics[scale=0.4]{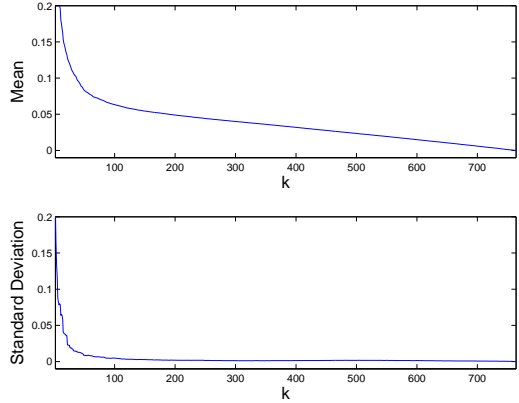}}
\caption{\scriptsize The mean and standard deviation of the relative
error of the length of the $k$-gons.} \label{f:L-diff}
\end{figure}
}
{
\subsection{Considerations for samples of contours} \label{sec-correspond}
In addition to ensuring that each contour in a sample is well
approximated, since each contour must be evaluated at $m$ times for
computations, it is necessary that each be evaluated at the same
times to maintain correspondence across the $n$ observations.  In
the ideal scenario, if each contour is well approximated by a
$k$-gon, then select $k$ stopping times $s_1,\dots,s_k\in [0,1)$
using the algorithm as described above.  For $j=1,\dots,n$, the
stopping points for $z_j$ can then be obtained by evaluating
$\gamma_j$ at times $s_1\cdot L^j, \dots, s_k \cdot L^j$, where
$L^j$ denotes the length of of $\gamma_j$.  Fig. \ref{f:corr_1}
shows two examples of utilizing this procedure for samples of
contours of hand gestures. Using the same stopping times for each
observation, 6 stopping points are highlighted in red to illustrate
the correspondence across the sample.
\begin{figure}[!ht]
\centerline{
\includegraphics[scale=0.5]{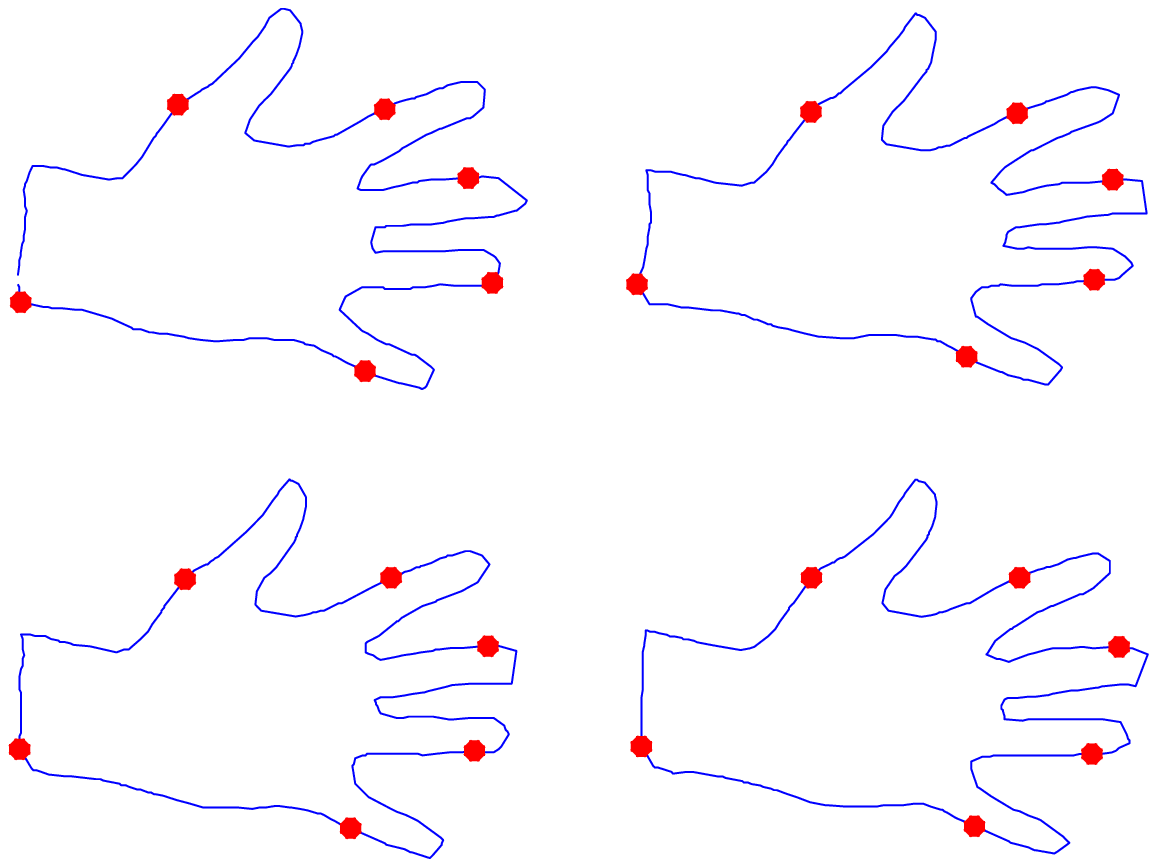}
\includegraphics[scale=0.5]{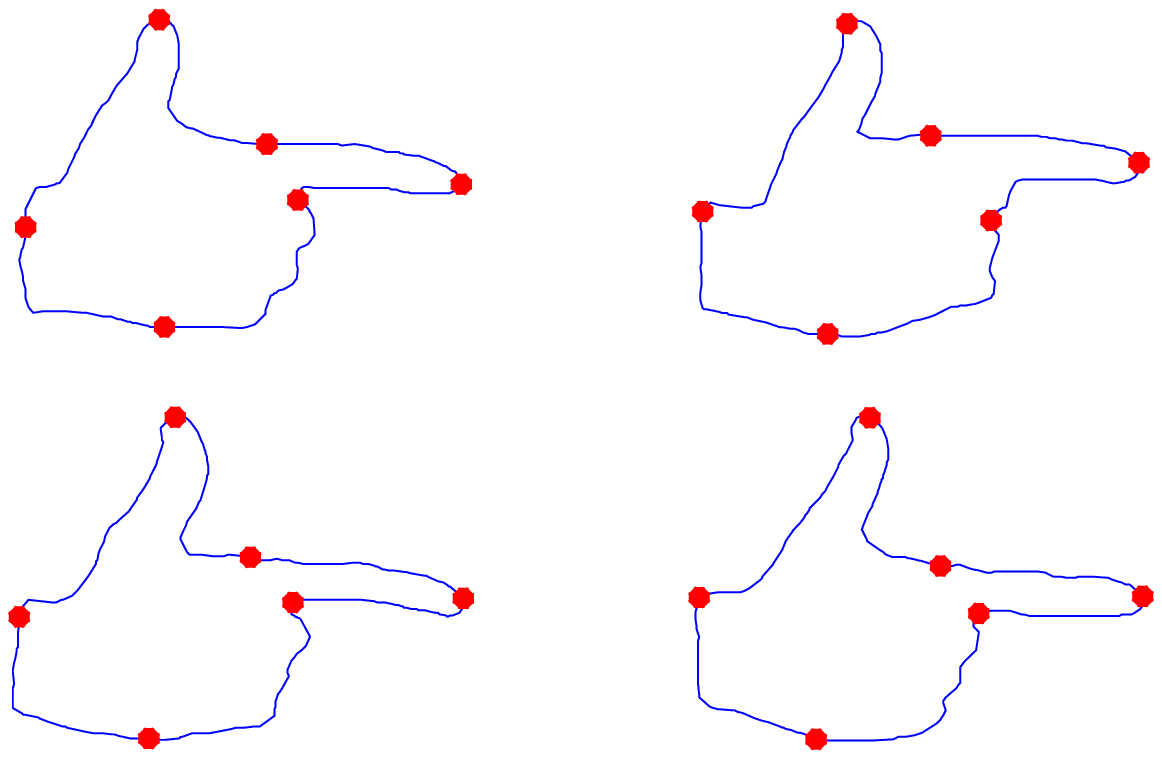}}
\caption{\scriptsize Correspondence of 6 stopping points for
contours of hand gestures of (a) the number '5'  and (b) the letter
'L'} \label{f:corr_1}
\end{figure}
Alternatively, if contour $j$ requires $k_j$ stopping points for
adequate approximation, where $k_i \neq k_j$ for at least one pair
$i,j$, then select stopping times for each contour.  Let
$\mathcal{T}_j$ denote the set of stopping times that generate the
$k_j$-gon $z_j$. In order to maintain correspondence, evaluate $z_j$
at the $m$ times contained in $\cup_{i=1}^n \mathcal{T}_i$ for
$j=1,\dots,n$.  This approach may also be utilized if each contour
is approximated using $k$ stopping points, but at different times.
Finally, even if the conditions of either of the previous scenarios
are met, it may be desired to consistently work within the same
shape space when working with multiple samples, so it may be
preferred to instead first consider approximating the contours and
then approximating each at $m$ subsequently chosen times, thus
separating the issues of approximation and correspondence. However,
for each of these scenarios, the selection of stopping points,
evaluation of the $k$-gon at $m$ times, and subsequent analysis can
be either semi-automated or fully automated, allowing for efficient
execution of the methodology.}
{
\subsection{Approximation of the sample mean shape}
Whenever one is dealing with an object that is conceptually of infinite, or very high, dimension,
a suitable dimension reduction must inevitably take place to enable computers to handle this object.
Because this process is usually a projection from an infinite dimensional sample space of which the
original object is an element, onto a finite dimensional subspace, we will for convenience refer to it
as a ``projection". In the current situation, the infinite dimensional object is the average of projection operators
$\hat \mu$ in equation \eqref{eq:hat-mu}, which is a positive element in the Hilbert space of
Hilbert-Schmidt operators. Above, this object has been approximated by rather high-dimensional
projection and then successively by projections of lower dimension, in order to arrive at an
approximation of sufficiently low dimension that is still a good representative of the original object.
What constitutes ``good" here has not been established rigorously, but instead primarily on eye ball
fitting, which may, in many cases, work rather well.
A more sophisticated approach seems possible,however, and might be based on a method employed
in the simulation of Brownian motion to determine a suitable number of points at which the values of
the process should be simulated (Gaines, 2012 \cite{Ga:2012}). The objective in this special case was to ensure that
the first few largest eigenvalues of the covariance operator of the projection would approximate those
of the original Brownian motion with prescribed accuracy. This could be achieved by using expansions
for the eigenvalues of the projection in terms of those of the original process, known from perturbation
theory. Since the statistic of main interest in the application considered in this paper is the largest
eigenvalue of $\hat{\Gamma},$ a similar approach should, in principle, be appropriate in the present
context. However, the problem of formally approximating infinite dimensional objects is a topic in its
own right that is beyond the scope of the present paper, and that should, moreover, be considered in
a more general context than presented by the situation at hand. }
\section{Application of the Neighborhood Hypothesis Test for Mean Shape}
{
The test discussed in Section 5 could be performed for a variety of applications.  The most likely
applications involve having a known extrinsic mean shape determined from
historical data.  In such cases, the hypothesis test can be used to determine
whether there is a significant deviation from the historical mean shape.  An
application in agriculture would be determining whether the use of a new fertilizer
treatment results in the extrinsic mean shape of a crop significantly changing from
the historical mean.  Similarly, this test could be performed for quality control
purposes to determine if there is a significant defect in the outline of an produced good.
In practice, $\delta$ will be determined by the application and the decision for a test
would be reached in the standard fashion.  However, for the examples presented here,
there is no natural choice for $\delta$, so one can instead consider setting
$Z=\xi_{1-\alpha}$ and solving for $\delta$ to show what decision would be reached
for any value of $\delta$.  To do so, it is important to understand the role of $\delta$.
The size of the neighborhood around $m_0$ is completely determined by $\delta$.
As such, it follows that smaller values of $\delta$ result in smaller neighborhoods.
In terms of $H_0$, this places a greater restriction on $M_\delta$ and $B_\delta$,
requiring $\mu_E$ to have a smaller distance to $m_0$.
For the examples presented here, the contours are approximated using $k=300$ stopping times,
so the shape space is embedded into $S(300,\mathbb{C})$ to conduct analysis.  In this
environment, consider having two $k$-gons that are identical except for at one time.
If this exceptional point for the second $k$-gon differs from the corresponding
point in the first $k$-gon by a difference of 0.01 units, then the distance between the shapes
inherited from $S(300,\mathbb{C})$ is approximately 0.0141.  For the hypothesis test, if
$\delta=0.0141$, then the neighborhood around $m_0$ would consist of distances between
shapes similar in scope to the situation described above.
}
First, consider an example for which the one sample test for
extrinsic mean shape is performed for sting ray contours. In this case,
the sample extrinsic mean shape for a sample of contours of $n=10$
sting rays  is the shape shown on the left hand side in Fig. \ref{f:one-sample-ray}.
\begin{figure}[!ht]
\begin{center}
\includegraphics[height=4.5cm,width=4.5cm]{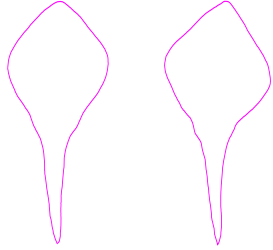}
            \caption{\scriptsize The extrinsic sample mean shape of a sample of
            10 sting ray contours and, respectively, the hypothesized extrinsic mean shape}
             \label{f:one-sample-ray}
\end{center}
\end{figure}
After performing the calculations, it was determined that for an
asymptotic level 0.05 test, the largest value of $\delta$ for which
we would reject the null hypothesis is 0.0290. {  For
perspective, this neighborhood has a radius roughly 2 times larger
than the example with the nearly identical $k$-ads described above.
This means that we would only reject the null hypothesis if we
required the sample extrinsic mean to be nearly identical to the
hypothesized mean. It should also be noted here that the sample size
is small here, but that the conclusion agrees with intuition based
upon a visual inspection of the contours.}
Now consider two examples involving contours of pears.  In this first
case, the sample consists of $n=87$ pears. The sample extrinsic
mean shape and hypothesized extrinsic mean shape are shown in Fig
\ref{f:one-sample-pear1}.
\begin{figure}[!ht]
\begin{center}
\includegraphics[height=4.5cm,width=4.5cm]{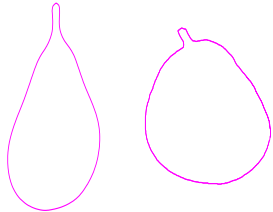}
            \caption{\scriptsize The extrinsic sample mean shape of a sample of
            87 pear contours and, respectively, the hypothesized extrinsic mean shape}
             \label{f:one-sample-pear1}
\end{center}
\end{figure}
It was determined that for an asymptotic level 0.05 test, the
maximum value of $\delta$ for which we would reject the null
hypothesis is 1.2941.  {  This value of $\delta$ is almost 92
times greater than the distance between the nearly identical $k$-ads.
This suggests that even if we greatly relax the constraints for similarity,
the null hypothesis would still be rejected.  This again agrees with intuition.}
In this last example, consider another sample of contours of pears.  In this
scenario, we consider a sample of $n=83$ pears.  The sample extrinsic mean
shape and hypothesized extrinsic mean shape are shown in Fig
\ref{f:one-sample-pear41-45}.
\begin{figure}[!ht]
\begin{center}
\includegraphics[height=4.5cm,width=4.5cm]{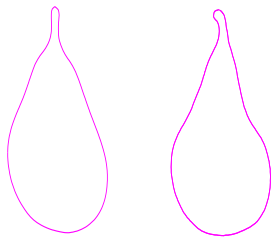}
            \caption{\scriptsize The extrinsic sample mean shape of a sample of
            83 pear contours and, respectively, the hypothesized extrinsic mean shape}
             \label{f:one-sample-pear41-45}
\end{center}
\end{figure}
After performing the calculations, we determined that for an
asymptotic level 0.05 test, the largest value of $\delta$ for which
we would reject the null hypothesis is 0.1969, meaning that our
procedure does not reject the null hypothesis, unless $\delta$ is
smaller then 0.1969. { For perspective, this neighborhood
has a radius nearly 14 times larger than the example with the nearly
identical $k$-ads described above. Unlike in the previous two examples
it is unclear whether the null hypothesis should be rejected in this case
without having a specific application in mind and, as such, this could be
considered a borderline case. }
\section{Bootstrap Confidence Regions for Means of Shapes of Contours}
{  Another method for performing inference, which we
consider now, is through the use of nonparametric nonpivotal bootstrap.
By repeatedly resampling from the available data and computing the distance between
each resampled mean and the sample mean, we can obtain a confidence region
for the extrinsic mean shape (for the sparse case, see Bandulasiri et al. (2008) \cite{BaBhPa:2009} and
Amaral et al (2010) \cite{AmDrPaWo:2010}).
The following examples of 95$\%$ nonparametric nonpivotal bootstrap confidence regions
illustrate this approach using 400 resamples and serve to illustrate a methodology for
visually understanding the confidence regions and their behavior. For each example, the
sample is displayed on the left and the 95$\%$ confidence region is displayed on the right
in blue with the extrinsic sample mean plotted in red.
}
\begin{figure}[!ht]
\centerline{
\includegraphics[height=4.0cm,width=6.66cm]{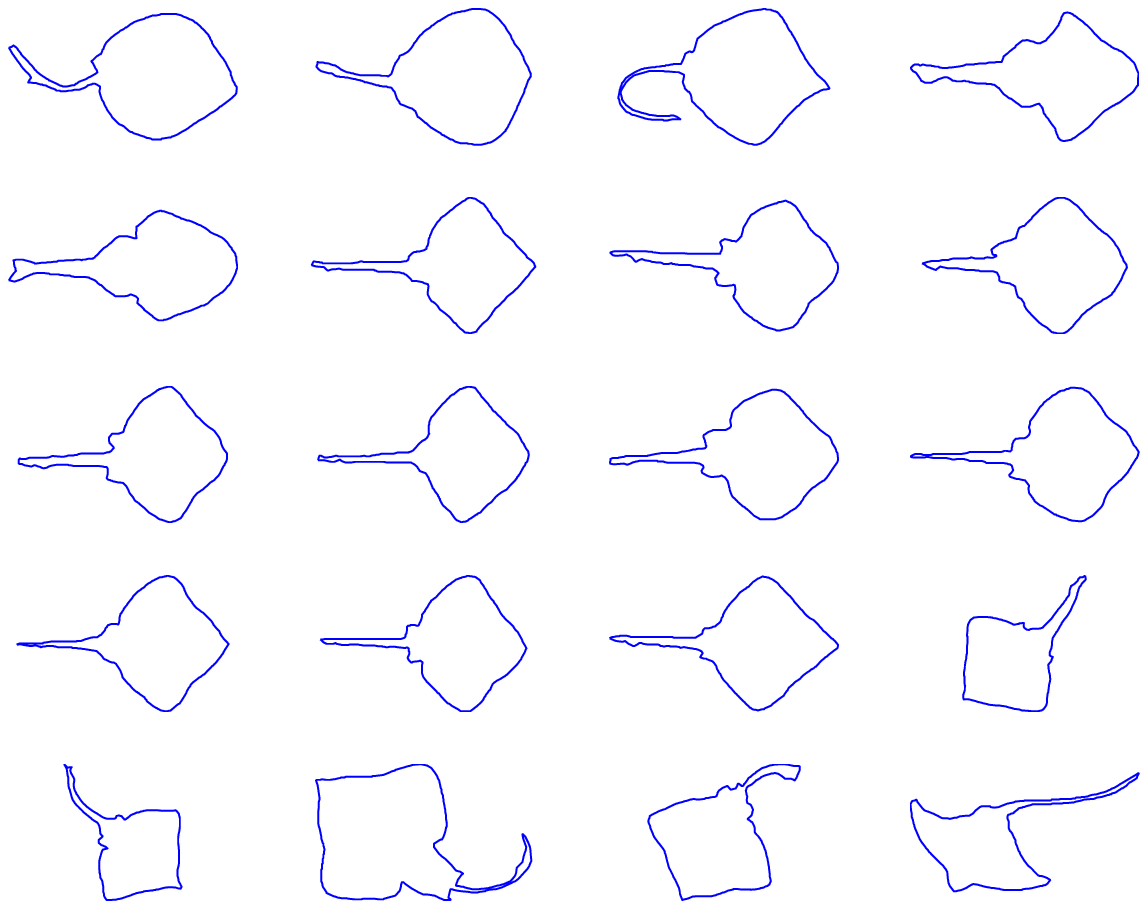}
\includegraphics[height=4.0cm,width=4.0cm]{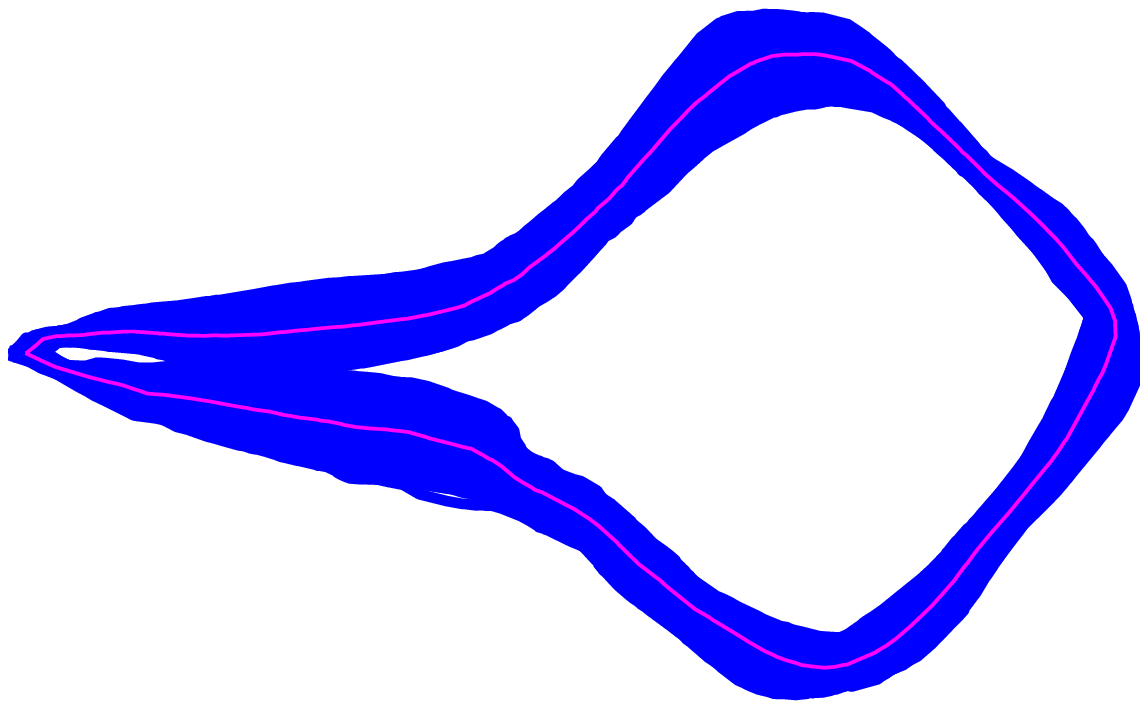}
}
            \caption{\scriptsize Bootstrap 95$\%$ confidence regions for the extrinsic mean shape of stingray contours using a sample of size 20.} \label{f:boot_ray}
\end{figure}
{
The first example, shown in Fig. \ref{f:boot_ray}, reveals that the confidence regions
are wider in the portions of the shape in which there is more variability in the sample.
Here, the bands are thicker in the regions corresponding to the tail and the top
and bottom of the front section of the stingray, where the variability is the greatest.
Secondly, samples with less variability result in narrower confidence regions.  This can
be seen by comparing Figs. \ref{f:boot_ray} and \ref{f:boot_worm}.  It is easy to see that
there is less overall variability in the shapes of the contours of the wormfish than there is
for the stingrays, which is reflected in the widths of the confidence regions.}
Furthermore, the effect of sample size on the confidence regions is clearly displayed in
Fig. \ref{f:boot_pears}. As should be expected, the confidence region constructed using
88 observations is substantially thinner than that constructed using just 20 observations.
\begin{figure}[!ht]
\centerline{
\includegraphics[height=4.0cm,width=6.66cm]{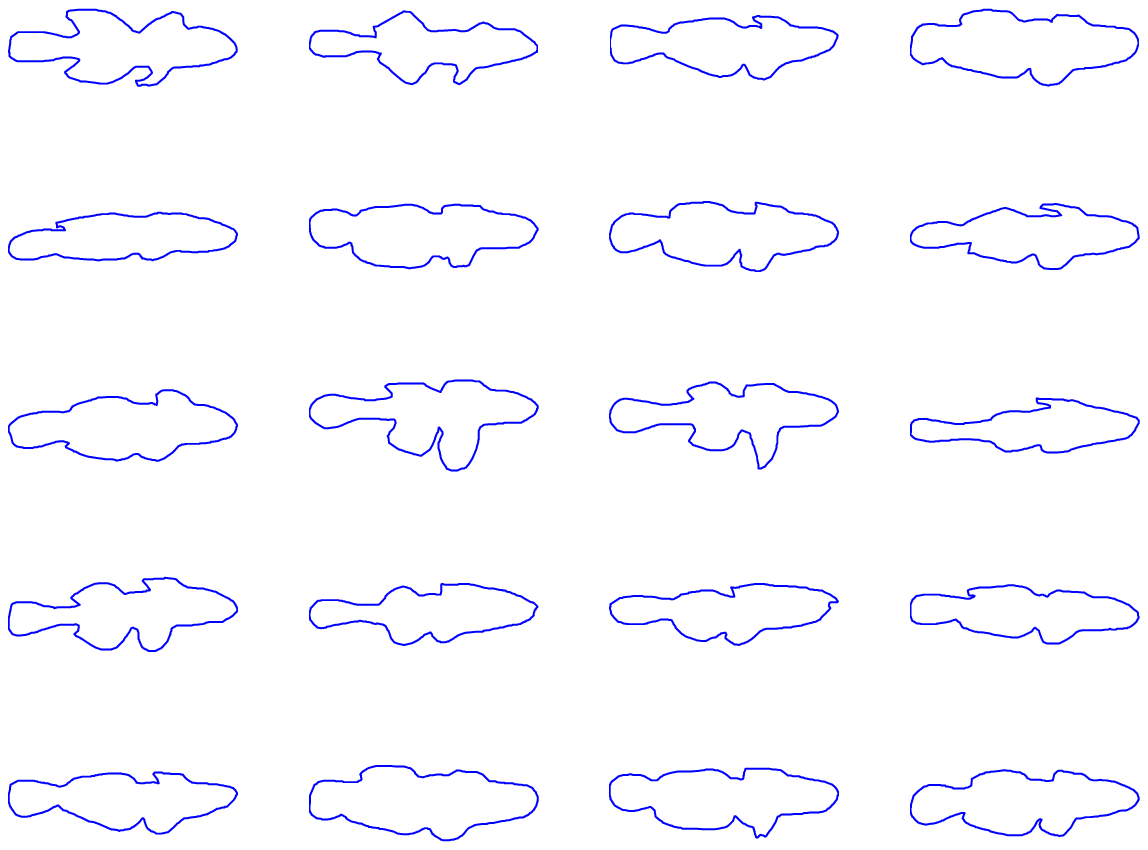}
\includegraphics[height=4.0cm,width=4.0cm]{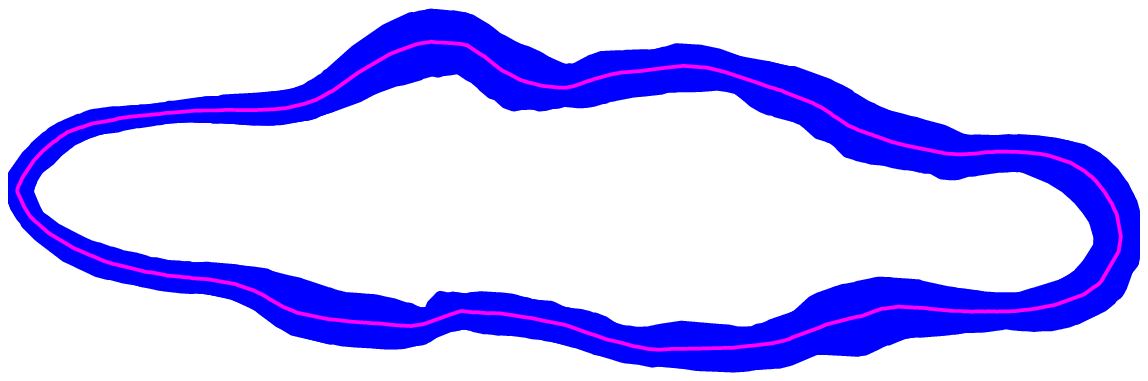}}
            \caption{\scriptsize Bootstrap 95$\%$ confidence regions for the extrinsic mean shape of wormfish contours using a sample of size 20.} \label{f:boot_worm}
\end{figure}
\begin{figure}[!ht]
\mbox{ } \hspace{30pt} {\bf (a)} \hspace{140pt} {\bf (b)} \\
\centerline{
\includegraphics[height=4.0cm,width=4.0cm]{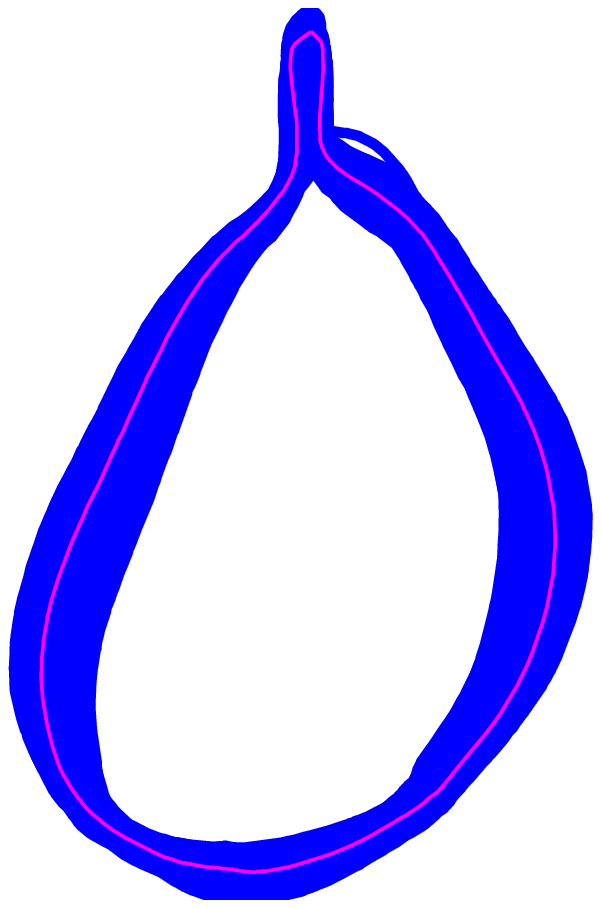}
\includegraphics[height=4.0cm,width=4.0cm]{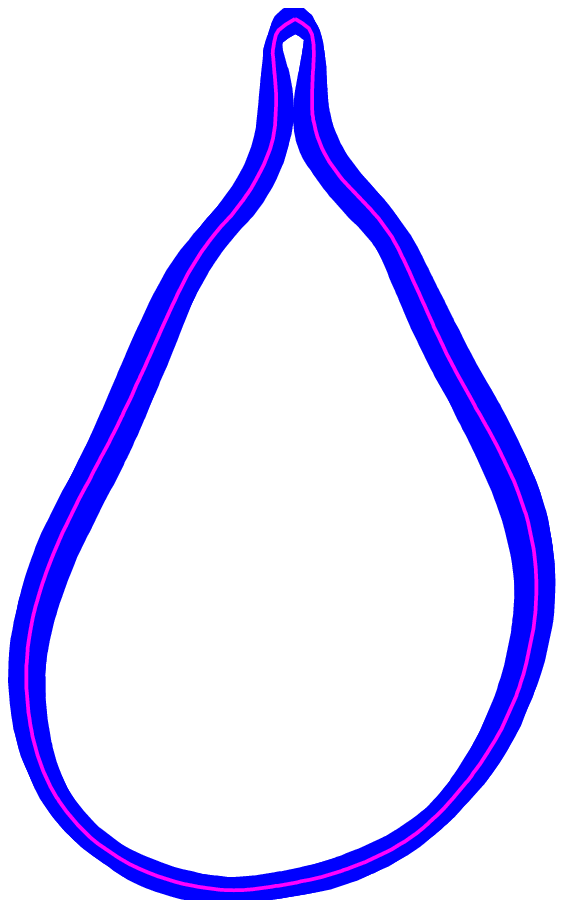}}
            \caption{\scriptsize Bootstrap 95$\%$ confidence region for the extrinsic mean shape for the pears based on (a) 20 observations and (b) 88 observations.} \label{f:boot_pears}
\end{figure}
In addition to being able to obtain sensible and intuitive results, the processing time
needed to compute bootstrap confidence regions for the VW extrinsic mean is small
compared to doing the same using the elastic framework for the analyzing the shape
of planar curves. The higher computational cost is due to a combination of the
intrinsic analysis and the elastic representation.  The calculation of an intrinsic mean
requires the use of an iterative algorithm. {  The square-root elastic
framework of Joshi { et al.} (2007) \cite{JoSrKlJe} adapts the algorithm of Klassen
{ et al.} (2004) for arc-length parametrized curves by inserting a reparametrization
step at each iteration.} This reparametrization step requires the use of either a dynamic
programming algorithm or a gradient descent approach. These time-consuming steps are
repeated a number of times during the calculation of the intrinsic mean, which, when
obtaining a bootstrap confidence region, results in the computational cost being
further compounded.
As an example, this methodology was performed on a sample of hand gestures
representing the letter ``L" using the concepts of elastic shape representation,
as described in Joshi {\it et al.} (2007) \cite{JoSrKlJe}, {  and our methodology.
The resulting confidence regions are given in Fig. \ref{f:elastic_lmark}. Using MATLAB on
a machine running Windows XP on an Intel Core 2 Duo processor running at 2.33 GHz,
these computations required 47.9 hours for the elastic method and only 47.5 seconds
for ours. The difference in the size of the contours displayed in Fig.\ref{f:elastic_lmark}
is due to the approaches using different methods for normalization.}  While we scale the
complex vector denoting the coordinates for the contour at the sampled times to have a
norm of 1, the square-root elastic framework scales curves to have unit length.
While both methods perform well at producing
estimates for mean shape and providing bootstrap confidence regions,
our approach is far more computationally efficient. For a
more detailed account of the advantages of extrinsic analysis of data
on manifolds, especially for obtaining bootstrap confidence regions,
see also Bhattacharya et al.(2012)\cite{BhElLiPaCr:2011}.
\begin{figure}[!ht]
\mbox{ } \hspace{30pt} {\bf (a)} \hspace{140pt} {\bf (b)} \\
\centerline{
\includegraphics[height=4.0cm,width=4.0cm]{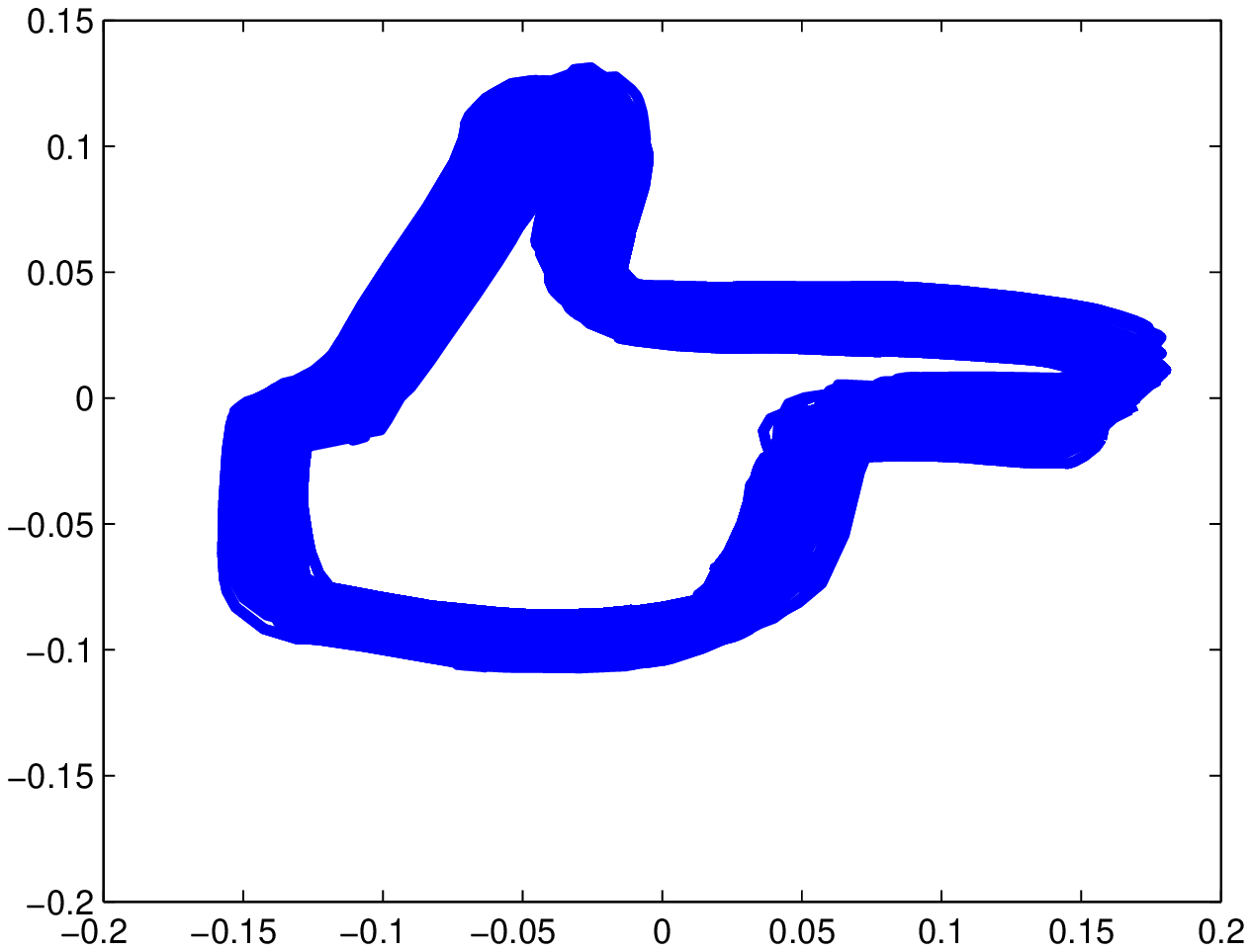}
\includegraphics[height=4.0cm,width=4.0cm]{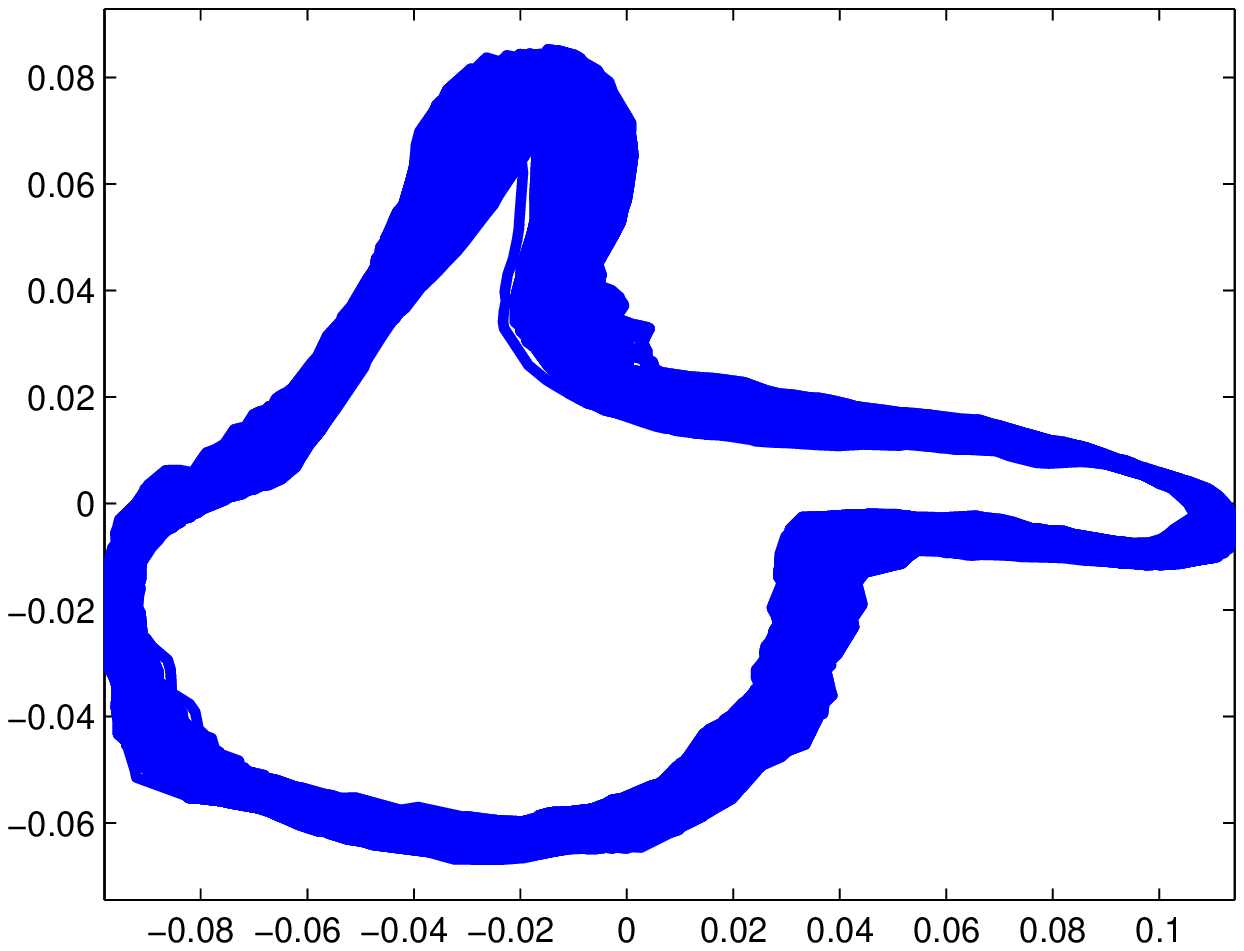}}
            \caption{\scriptsize Bootstrap 95$\%$ confidence regions for (a)
            the intrinsic mean shape, as defined by Joshi et al. (2007),
            and (b) the extrinsic mean shape of the "l" hand gesture.} \label{f:elastic_lmark}
\end{figure}
\section{Discussion}
In this paper, we have described  {  how to address the neighborhood hypothesis for one population mean on a Hilbert manifold. This first paper on data analysis on a Hilbert manifold opens up a new area for data analysis of infinite dimensional objects that can not be represented on a Hilbert space. This is a rich domain for further study, with potential extensions and techniques coming from recent advances  in statistics on finite dimensional manifolds and data analysis on infinite dimensional Hilbert spaces.}
Regarding applications in shape analysis, while our {theory and computational methodology leads
to} the estimation of the extrinsic mean direct similarity shapes of planar {contours}, this
approach could be extended further to any infinite configurations in the Euclidean
plane, including 1-dimensional CW-complexes and planar domains, given that the
plane is separable. For example, one may consider shapes of edge maps obtained
from gray-level images. In these cases, the problem of properly matching becomes
much more difficult because, not only do points on a given edge from one image
need to be matched to corresponding points on the corresponding edge in another
image, but each edge in an image must be matched to the corresponding edge in
another image, as well.
{

Additionally, we have only considered inference techniques for
one-sample problems.  While two-sample and multi-sample
procedures may be more practical for many data analysis
purposes, both the one-sample neighborhood hypothesis test and
the non-pivotal nonparametric bootstrap confidence regions are
useful nonparametric techniques for the estimation of the extrinsic
mean shape for planar contours.  In addition, they serve as an
important first step towards the development and/or adapting of
the desired two-sample and multi-sample procedures.

Furthermore, it should be noted that planar contour data carries
with it innate difficulties that can be challenging to account for
in the data analysis described in this paper.  First, it is
important to maintain a consistent camera position with respect to
the object of interest to ensure that direct similarity shape
analysis is appropriate.  Otherwise, it may be more appropriate to
consider projective shape analysis instead.

Finally, planar contours may commonly depict images of 3D objects.
In the case that the object is relatively flat, such as with  the stingrays
in the example here, a slight shift in the camera angle
may not result in a substantial change in the contour obtained from
the digital image. However, for other 3D objects, such as the dogs
and hand gestures, a slight shift may result in a drastic change in
the form of the associated contour, { which is an issue that has largely
been ignored in the literature.}
In such cases, neither planar direct similarity shape nor planar projective
shape may be an adequate descriptor for analyzing the contour data.
Because of {these issues}, it is important to take great care with planar
shape analysis of contours { that arise from} images of 3D solid {objects.
In such cases, where there is an absence of additional information on the
scenes pictured, this care can help to ensure that a meaningful analysis
can be conducted.}
}
\section*{Acknowledgement}
{ We are grateful to the organizers of the Analysis of Object Data program 2010/2011 at SAMSI, and, in particular, to Hans Georg M$\ddot{u}$ller and to James O. Ramsey for useful conversations on infinite object data analysis regarded as data analysis on Hilbert manifolds. Thanks also to Rabi N. Bhattacharya and John T. Kent for discussions on the subject and to Ben Kimia and Shantanu H. Joshi for providing access to the silhouette data library.}

\end{document}